\newtheorem{theorem}{\bf Theorem}
\newtheorem{problem}{\bf Problem}
\newtheorem{definition}{\bf Definition}
\newtheorem{proposition}{\bf Proposition}
\newtheorem{lemma}{\bf Lemma}
\newtheorem{assumption}{\bf Assumption}
\newcommand*{\LONGVERSION}{}%
\title{\LARGE \bf
Optimal Output Feedback Architecture for Triangular LQG Problems
}
\author{Takashi Tanaka and Pablo A. Parrilo 
\thanks{Both authors are with Laboratory for Information and Decision Systems,
        Massachusetts Institute of Technology, Cambridge, MA, USA
        {\tt\small ttanaka@mit.edu, parrilo@mit.edu}}%
}
\begin{document}

\maketitle
\thispagestyle{empty}
\pagestyle{empty}

\begin{abstract}
Distributed control problems under some specific information constraints can be formulated as (possibly infinite dimensional) convex optimization problems. The underlying motivation of this work is to develop an understanding of the optimal decision making architecture for such problems.
In this paper, we particularly focus on the $N$-player triangular LQG problems and show that the optimal output feedback controllers have attractive state space realizations. The optimal controller can be synthesized using a set of stabilizing solutions to $2N$ linearly coupled algebraic Riccati equations, which turn out to be easily solvable under reasonable assumptions.
\end{abstract}

\section*{Nomenclature}

\begin{list}{\labelitemi}{\leftmargin=0em}
\item 
$\mathbf{G}(s)=\left[ \begin{array}{c|c}A & B \\ \hline 
 C & D \end{array}\right]:=C(sI-A)^{-1}B+D$ represents a proper rational transfer function matrix. 
 
\item The space of matrix valued functions $\mathbf{G}$ such that $\|\mathbf{G}\|\triangleq \sqrt{\left<\mathbf{G},\mathbf{G}\right>}<\infty$ is denoted by $L_2$, where $\left<\mathbf{G}_1, \mathbf{G}_2\right>\triangleq \frac{1}{2\pi}\int_{-\infty}^{\infty} tr \mathbf{G}_1^*(j\omega)\mathbf{G}_2(j\omega) d\omega$.
$L_2$ can be written as $L_2=H_2\oplus H_2^\perp$, where $H_2$ is the subspace of functions $\mathbf{G}$ analytic in $Re(s)>0$. The $H_2$ norm of a function $\mathbf{G}\in H_2$ can be computed using the same formula $\|\mathbf{G}\|\triangleq \sqrt{\left<\mathbf{G},\mathbf{G}\right>}$. The Hardy space $H_\infty$ will be also used in this paper.

\item Let $I_n$ be the $n=n_1+\cdots+n_N$ dimensional identity matrix. Denote by $E^i$ the $i$-th block column of $I_n$, and by $E_i$ the $i$-th block row of $I_n$. We also write $E^{\uparrow  i}$ to denote the first $n_1+\cdots+n_i$ columns of $I_n$, while $E^{\downarrow i}$ is the last $n_i+\cdots+n_N$ columns of $I_n$. Similarly, $E_{\uparrow  i}$ denotes the first $n_1+\cdots+n_i$ rows of $I_n$, while $E_{\downarrow i}$ is the last $n_i+\cdots+n_N$ rows of $I_n$. 
For a general matrix $M$, shorthand notations such as $M_{\uparrow i}=E_{\uparrow i}M$, $M^{\uparrow i}=ME^{\uparrow i}$, $M_{\uparrow i}^{\uparrow j}=E_{\uparrow i}ME^{\uparrow j}$ will be also used.

\item $\mathcal{I}$ specifies a sparsity structure of matrices, or matrix-valued functions. If $M$ has $N\times N$ subblocks, $M\in\mathcal{I}_{LBT}$ means that $M$ is lower block triangular. If $M\in \mathcal{I}_{\downarrow i}^{\uparrow i}$, then $M$ has nonzero components only in the $(\cdot)_{\downarrow i}^{\uparrow i}$ subblock. If $M\in\mathcal{I}_i^j$, only $(i,j)$-th subblock can be nonzero.

\item $(X,K)=ARE_p(A,B,F,H)$ represents a solution of algebraic Riccati equation
$
A^TX+XA-(XB+F^TH)\Psi^{-1}(XB+F^TH)^T+F^TF=0
$
with $K=-\Psi^{-1}(XB+F^TH)^T$ where $\Psi\triangleq H^TH$. Similarly, $(Y,L)=ARE_d(A,C,W,V)$ represents a solution of
$
AY+YA^T-(CY+VW^T)^T\Phi^{-1}(CY+VW^T)+WW^T=0
$
with $L=-(CY+VW^T)^T\Phi^{-1}$ where $\Phi=VV^T$.

\item $\text{row}\{M_1,\cdots,M_n\}\triangleq [M_1 \; \cdots \; M_n]$, $\text{col}\{M_1,\cdots,M_n\}\triangleq [M_1^T \; \cdots \; M_n^T]^T$.
\end{list}

\section{Introduction}

It is widely recognized that tractability of distributed control problems greatly depends on the information structure underlying the problem. If the information structure is arbitrary, the problem can be hopelessly hard as demonstrated by an iconic example by Witsenhausen in 1968. 
In contrast, many tractability results initiated by Ho and Chu \cite{RefWorks:271} suggest that distributed control problems seem much more accessible when decision makers form a hierarchy in terms of their ability to observe and control the physical system.
Currently, a unification via the \emph{quadratic invariance (QI)} introduced by \cite{RefWorks:17} is known to capture a wide class of distributed control problems that can be formulated as (infinite dimensional) convex optimization problems. 
Unfortunately, the QI framework does not immediately lead us to an explicit form of the optimal solution, and as a result, state space realizations of the optimal controllers remain unknown for many QI optimal control problems.
This paper derives a state space realization of the solution to the triangular LQG problem, which is a special case but an important instance of the QI optimal control problems.

The triangular LQG problem is formulated as follows.
Suppose that the transfer function of the system to be controlled is given by
$$
\mathbf{G}=\left[ \begin{array}{cc} \mathbf{G}_{11} & \mathbf{G}_{12} \\
\mathbf{G}_{21} & \mathbf{G}_{22} \end{array}\right]\triangleq 
\left[ \begin{array}{c|cc} A&W&B \\ \hline 
F&0&H \\
C&V&0
 \end{array}\right].
$$
Matrices $A\in\mathbb{R}^{n\times n}, B\in\mathbb{R}^{n\times m}, C\in\mathbb{R}^{p\times n}$ are partitioned according to $n=n_1+\cdots+n_N, m=m_1+\cdots+m_N, p=p_1+\cdots+p_N$, and $A,B,C \in \mathcal{I}_{LBT}$ with respect to this partitioning. 
The injected noise $w$, performance output $z$, control input $u$, and observation output $y$ are related by $\text{col}\{z,y\}=\mathbf{G}\text{col}\{w,u\}$.
A controller transfer function $\mathbf{K}\in \mathcal{I}_{LBT}$ needs to be designed so that $u=\mathbf{K}y$ minimizes the $H_2$ norm of the closed loop transfer function from $w$ to $z$.
\begin{problem}
\label{prob1}
Find a state space realization of the optimal solution $\mathbf{K}_{opt}$ to the problem:
\begin{subequations}
\label{h2controlproblem}
\begin{align}
\min \; &\|\mathbf{G}_{11}+\mathbf{G}_{12}\mathbf{K}(I-\mathbf{K}\mathbf{G}_{22})^{-1}\mathbf{G}_{21}\|^2 \\
\text{s.t. } & \mathbf{K}\in \mathcal{I}_{LBT} \text{ and stabilizing.} 
\end{align}
\end{subequations}
\end{problem}
Problem \ref{h2controlproblem} can be interpreted as a distributed control problem under a particular information constraint shown in Fig. \ref{figposet}.
We make some natural assumptions on system matrices $A,B,C,F,H,W$ and $V$ so that Problem \ref{h2controlproblem} is well-posed.
\begin{assumption}
\label{asmp1}
\begin{itemize}
\item[ 1.] For every $i\in \{1,2,\cdots, N\}$, $(A_{ii},B_{ii})$ is stabilizable and $H$ has full column rank.
\item[ 2.] $\left[ \begin{array}{cc} A-j\omega I & B \\ F & H \end{array}\right]$ has full column rank for all $\omega \in \mathbb{R}$.
\item[ 3.] For every $i\in \{1,2,\cdots, N\}$, $(C_{ii},A_{ii})$ is detectable and $V$ has full row rank.
\item[ 4.] $\left[ \begin{array}{cc} A-j\omega I & W \\ C & V \end{array}\right]$ has full row rank for all $\omega \in \mathbb{R}$.
\end{itemize}
\end{assumption}
Due to the \emph{quadratic invariance} property \cite{RefWorks:17}, Problem \ref{h2controlproblem} can be written as a convex optimization problem by introducing a particular re-parametrization \cite{RefWorks:277}. It is also straightforward to see by \emph{vectorization} \cite{RefWorks:293} that Problem \ref{h2controlproblem} admits a unique and rational solution in this parameter domain, and hence that $\mathbf{K}_{opt}$ is also rational.
In this paper, we further show that the optimal controller has a fascinating state space structure, which can be easily synthesized by solving a set of linearly coupled Riccati equations. This work extends recent progress in the understanding of state space solutions to distributed control problems \cite{RefWorks:265,RefWorks:269,RefWorks:294,RefWorks:273,RefWorks:267}.

\ifdefined\DEBUG
\else
 \begin{figure}[tb]
 \begin{center}
 \includegraphics[width=0.6\linewidth, bb=100 80 600 450]{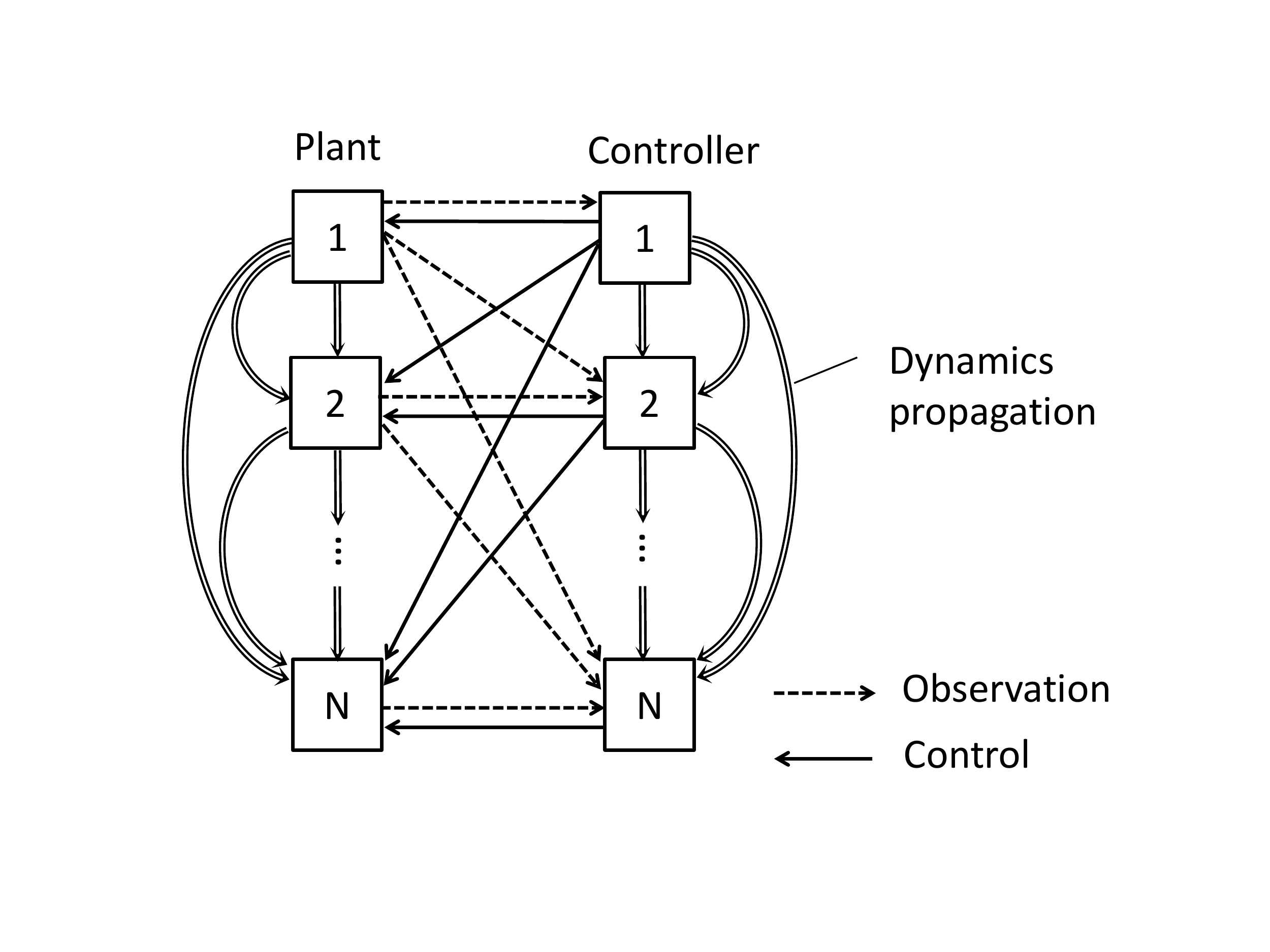}
 \caption{$A,B,C \in \mathcal{I}_{LBT}$ means that system dynamics propagates only downward on the chain of local subsystems (defined by $A_{ii},B_{ii},C_{ii}$).
Each row of $\mathbf{K}\in \mathcal{I}_{LBT}$ can be considered as an independent controller, who controls local subsystem based on the observations of the outputs of upstream subsystems. 
Alternatively, each column of $\mathbf{K}$ can be seen as an independent controller, who observes local output and controls downstream subsystems. Another valid interpretation is to view $\mathbf{K}\in \mathcal{I}_{LBT}$ as a sum of local controllers $\mathbf{K}_i\in \mathcal{I}_{\downarrow i}^{\uparrow i}$, who observes upstream subsystems and controls downstream systems (the above figure).}
 \label{figposet}
 \end{center}
 \end{figure}
\fi

\section{Summary of the result}
Under Assumption \ref{asmp1} and \ref{asmp2} (the second assumption will be discussed later),  a state space model of $\mathbf{K}_{opt}$ can be constructed.
This requires to determine $N$ controller gains $K_1,\cdots,K_N$ and $N$ observer gains $L_1,\cdots,L_N$ by finding a set of stabilizing solutions to algebraic Riccati equations: 
\begin{subequations}
\label{coupledrc}
\begin{align}
(X_1, K_1)&=ARE_p(A,B,F,H) \label{rc1c}\\
(X_i, K_i)&=ARE_p(A\!+\!L_{i-1}C_{\uparrow i-1},B^{\downarrow i}, \nonumber \\
 &-\!H^{\downarrow i-1}K_{i-1},H^{\downarrow i}), \;\; i\in\{2,3,\cdots,N \}\label{rcic}\\
(Y_N, L_N)&=ARE_d(A,C,W,V) \label{rcNf}\\
(Y_i,L_i)&=ARE_d(A\!+\!B^{\downarrow i+1}K_{i+1}, C_{\uparrow i}, \nonumber \\
&-\!L_{i+1}V_{\uparrow i+1}, V_{\uparrow i}),\;\; i\in\{1,2,\cdots,N\!-\!1 \}.\label{rcif}
\end{align}
\end{subequations}
Notice that (\ref{rc1c}) and (\ref{rcNf}) can be solved independently since they require problem data $A,B,C,F,H,W$ and $V$ only, while the remaining $2N-2$ Riccati equations (\ref{rcic}) and (\ref{rcif}) have dependencies on each other.
By carefully looking at their substructures, we show that unknown variables can be sequentially determined as shown in Fig. \ref{fig23}. Apparently, concepts of control and estimation are highly symmetric in this synthesis.

It is convenient to introduce $nN$ dimensional square incidence matrices $\zeta$ and $\mu$ to describe the architecture of $\mathbf{K}_{opt}$. Define $\zeta$ by dividing it into $N\times N$ sub-blocks and setting its $(i,j)$-th sub-block to $I_n$ if $i\geq j$ and to zero otherwise. Matrix $\mu$ is defined as the inverse of $\zeta$.

\begin{theorem}
\label{maintheo}
Under Assumption \ref{asmp1} and \ref{asmp2}, (\ref{coupledrc}) admits a unique set of solutions $(X_i,K_i), (Y_i,L_i), i=1,\cdots,N$ such that each of them is a stabilizing solution to the corresponding Riccati equation in (\ref{coupledrc}). Using these solutions, the optimal controller to Problem \ref{h2controlproblem} can be written as 
$
\mathbf{K}_{opt}=
\left[ \begin{array}{c|c} 
A_K & B_K \\ \hline 
C_K & 0 \end{array}\right]
$
where $A_K,B_K,C_K$ are defined by
\begin{subequations}
\label{akbkck}
\begin{align}
A_K=&I \otimes  A+
\text{diag}\{ L_1C_{\uparrow 1},\cdots, L_NC_{\uparrow N}\} \nonumber \\
&+\zeta \text{diag}\{ B^{\downarrow  1}K_1, \cdots, B^{\downarrow  N}K_N \}\mu \\
B_K=&-\text{col}\{ L_1E_{\uparrow 1}, \cdots, L_NE_{\uparrow N} \} \\
C_K=&\text{row}\{ E^{\downarrow 1}K_1, \cdots, E^{\downarrow N}K_N \}\mu.
\end{align}
\end{subequations}
Moreover, the optimal value of Problem \ref{h2controlproblem} can be written as $J_{opt}^2=J_{cnt}^2+J_{dcnt}^2$ where
\begin{align*}
&J_{cnt}^2=tr W^TX_1W+ tr \Psi K_1Y_NK_1^T \\
&J_{dcnt}^2\!=\!\!\sum_{j=1}^{N-1}\!\!tr(HK_1\!\!-\!\!H^{\downarrow j+1}K_{j+1})Y_j(HK_1\!\!-\!\!H^{\downarrow j+1}K_{j+1})^T.
\end{align*}
\end{theorem}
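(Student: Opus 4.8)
The plan is to exploit the \emph{quadratic invariance} of $\mathcal{I}_{LBT}$ with respect to $\mathbf{G}_{22}$ (which holds because $\mathbf{G}_{22}\in\mathcal{I}_{LBT}$ and $\mathcal{I}_{LBT}$ is an algebra), so that, as already noted in the text, Problem~\ref{h2controlproblem} is equivalent to the strictly convex program $\min_{\mathbf{Q}\in\mathcal{I}_{LBT}}\|\mathbf{G}_{11}+\mathbf{G}_{12}\mathbf{Q}\mathbf{G}_{21}\|^2$ in a Youla parameter $\mathbf{Q}$. Because vectorization shows this projection has a \emph{unique} rational minimizer, it suffices to (i) build a candidate controller from the Riccati data, (ii) certify that it lies in $\mathcal{I}_{LBT}$ and is stabilizing, and (iii) verify that it attains the minimum. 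Conceptually the construction realizes a \emph{layered separation principle}: the control equations (\ref{rc1c})--(\ref{rcic}) refine the centralized LQR gain $K_1$ into the per-layer gains $K_i$, the dual estimation equations (\ref{rcNf})--(\ref{rcif}) produce observer gains $L_i$, and the incidence matrices $\zeta,\mu$ assemble the $N$ running estimates into a single $nN$-dimensional controller.

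First I would settle the solvability claim. Equations (\ref{rc1c}) and (\ref{rcNf}) are the ordinary control and estimation Riccati equations, whose unique stabilizing solutions exist under parts~1--2 and parts~3--4 of Assumption~\ref{asmp1}, respectively. The apparent cyclic dependence between (\ref{rcic}) and (\ref{rcif}) is the crux of this step: using the block-triangular sparsity of $B,C,H,V$ one shows the dependency graph is in fact acyclic, so the gains can be computed in the order indicated in Fig.~\ref{fig23}. Once an equation is reached in that schedule it only involves previously determined data and reduces to a standard algebraic Riccati equation; existence and uniqueness of its stabilizing solution then follow from the stabilizability/detectability of the relevant intermediate pairs, which is precisely what Assumption~\ref{asmp2} is meant to guarantee.

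Next I would verify the state-space candidate $(A_K,B_K,C_K)$. The sparsity $\mathbf{K}_{opt}\in\mathcal{I}_{LBT}$ should follow from the telescoping action of $\zeta\,\text{diag}\{\cdot\}\mu$ together with the $E^{\downarrow i}$, $E_{\uparrow i}$ masks, which confine each layer's control and injection to the correct sub-blocks; internal stability follows because, after a state similarity transformation built from $\zeta$ and $\mu$, the closed-loop $A$-matrix becomes block-triangular with diagonal blocks equal to the Hurwitz closed-loop matrices of the individual Riccati equations in (\ref{coupledrc}). For optimality I would perform a \emph{completion of squares} on $\|\mathbf{G}_{11}+\mathbf{G}_{12}\mathbf{Q}\mathbf{G}_{21}\|^2$, using the Riccati identities to cancel the cross terms layer by layer. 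The centralized component collapses to $J_{cnt}^2=tr\,W^TX_1W+tr\,\Psi K_1Y_NK_1^T$, exactly as in classical LQG, while the residual that the triangular information constraint forbids from being removed accumulates as $J_{dcnt}^2$, whose summands $tr(HK_1-H^{\downarrow j+1}K_{j+1})Y_j(HK_1-H^{\downarrow j+1}K_{j+1})^T$ measure the mismatch between the centralized gain $K_1$ and each layered gain $K_{j+1}$.

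I expect step~(iii) to be the main obstacle: showing that all cross terms in the completion of squares vanish and that the leftover is \emph{exactly} $J_{dcnt}^2$, not merely bounded by it. This is where the $2N$ coupled Riccati equations must be invoked in concert, and where the algebra over the $\zeta,\mu$-structured realization is heaviest. A subtle point is that the orthogonality (first-order) condition certifying optimality must be checked \emph{after} projection onto $\mathcal{I}_{LBT}$, so that the candidate is optimal over the constrained set rather than over all of $H_2$; the surviving $J_{dcnt}^2$ is exactly the norm of the component that this projection cannot cancel. A secondary difficulty, already flagged above, is rigorously establishing the acyclicity of the Riccati schedule in steps~(i)--(ii).
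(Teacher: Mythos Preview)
Your proposal has two issues worth flagging.

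\textbf{A genuine gap in step (i).} You mischaracterize Assumption~\ref{asmp2}. It is \emph{not} a stabilizability/detectability hypothesis on the intermediate pairs; those properties follow from Assumption~\ref{asmp1} alone (this is Proposition~\ref{propstabsol}). The dependency graph among the $2N$ Riccati equations is \emph{not} fully acyclic. What actually happens is a three-step decomposition: the $(\cdot)_{\downarrow i}^{\downarrow i}$ and $(\cdot)_{\uparrow i}^{\uparrow i}$ sub-blocks of (\ref{rcic})--(\ref{rcif}) decouple into \emph{smaller} Riccati equations (\ref{rcicsub})--(\ref{rcifsub}) that can be solved sequentially under Assumption~\ref{asmp1}; the off-diagonal blocks $\bar{X}_i,\bar{K}_i,\bar{Y}_i,\bar{L}_i$ then satisfy the genuinely coupled \emph{linear} system (\ref{lineqsys}), and Assumption~\ref{asmp2} is precisely the hypothesis that this linear system is nonsingular; only then do Lyapunov equations finish the job. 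Your plan ``once an equation is reached \ldots it reduces to a standard algebraic Riccati equation'' skips the linear coupling in Step~2 entirely, and without it you cannot conclude uniqueness or even existence of the full $X_i,K_i,Y_i,L_i$.

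\textbf{A different route in step (iii).} Your completion-of-squares plan is conceptually plausible but differs from the paper's argument, and as written it lacks the mechanism that makes the constrained first-order condition checkable. The paper does \emph{not} complete the square on $\|\mathbf{G}_{11}+\mathbf{G}_{12}\mathbf{Q}\mathbf{G}_{21}\|^2$. Instead it closes the loop with the candidate $\mathbf{K}_{opt}$, reparametrizes perturbations by a Youla $\mathbf{Q}\in H_2\cap\mathcal{I}_{LBT}$, and uses Proposition~\ref{prop1} to replace the awkward triangular constraint by $N$ \emph{rectangular} constraints $\mathcal{I}_{\downarrow i}^{\uparrow i}$. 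For each $i$ the relevant subspace $S_{i,i}$ is then expressed via repeated inner/co-inner factorizations (Lemma~\ref{lemUM}), and a nested-projection identity (Theorem~\ref{theonproj}, Lemma~\ref{lempij}) gives $\pi_{S_{i,i}}(\mathbf{G}_{11}^{cl})$ explicitly as a state-space object that is shown to vanish. The cost decomposition $J_{opt}^2=J_{cnt}^2+J_{dcnt}^2$ then falls out by Pythagoras from the orthogonal residuals $\mathbf{R}_{(\cdot)\rightarrow(\cdot)}$ of the nested projections, rather than from algebraic cross-term cancellation. What the paper's approach buys is that each of the $2N$ Riccati identities is invoked exactly once, at a specific factorization step, so the ``heavy algebra'' you anticipate is replaced by a sequence of rank-one similarity transformations. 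If you pursue completion of squares, you will still need an analogue of Proposition~\ref{prop1} to break $\mathcal{I}_{LBT}$ into pieces on which the square can be completed; without it the orthogonality check you allude to remains unimplemented.
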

Note that $J_{cnt}$ can be interpreted as the optimal cost when the controller is designed without information constraints (compare with the result of standard $H_2$ control). The price to pay to impose information constraints as in Fig. \ref{figposet} is precisely given by $J_{dcnt}$.
The optimal controller given in Theorem \ref{maintheo} turns out to be a certainty equivalent controller. That is, if $\text{col}\{x^{K_1}(t), \cdots, x^{K_N}(t)\}$ is the state of the optimal controller, then $x^{K_i}(t)$ can be interpreted as the least mean square estimate of $x(t)$ based on the observations of outputs of upstream subsystems (see Appendix \ref{secce}).
Nevertheless, as Fig. \ref{fig23} shows, controller and observer gain must be jointly designed when $N\geq 2$.
The well-known \emph{separation principle} holds only in an exceptional circumstance of $N=1$, where controller and observer gains can be designed separately.

 \ifdefined\DEBUG
\else
\begin{figure}[tb]
 \begin{center}
 \includegraphics[width=0.9\linewidth, bb=50 120 650 420]{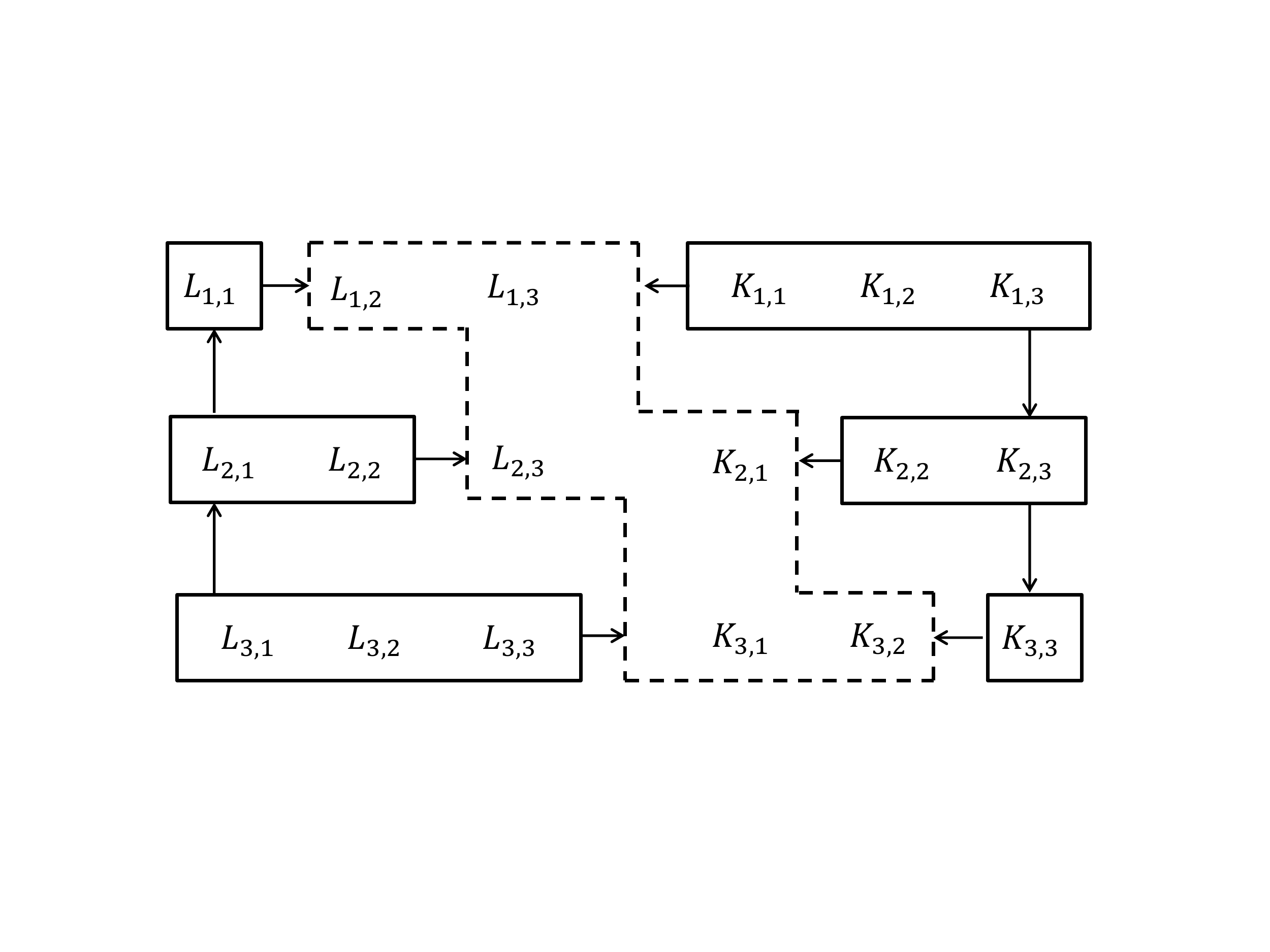}
 \caption{This diagram illustrates the order in which controller/observer gains are determined when $N=3$. Suppose $K_{i,j}$ is the $i$-th player's controller gain acting on his state estimate of $x_j$ (i.e., $K_i=\text{row}\{K_{i,1},K_{i,2},K_{i,3}\}$). Similarly, suppose  $L_{i,j}$ is the $i$-th player's estimator gain to update his state estimate of $x_j$ (i.e., $L_i=\text{col}\{L_{i,1},L_{i,2},L_{i,3}\}$).
The solution process starts by solving (\ref{rc1c}) to generate $K_{1,1},K_{1,2},K_{1,3}$.
This recursively allows one to solve for $K_{2,2},K_{2,3}$ and then for $K_{3,3}$. Estimator gains can be also found by first solving (\ref{rcNf}) and then proceed backward on the chain. Finally, the remaining gains (in the dotted line) is computed at once by solving a system of linear equations. 
This procedure is a natural extension of the case of $N=2$ reported in \cite{RefWorks:294}. }
 \label{fig23}
 \end{center}
 \end{figure}
\fi

\section{Coupled Riccati Equations}
\label{seccoupledrc}

Since the proof of optimality of the proposed controller is closely related to the solvability of the set of Riccati equations (\ref{coupledrc}), we study its solution procedure in this section. 
We introduce the following partitioning of unknown matrices $X_i, Y_i, K_i$ and $L_i$ for every $i\in\{1,2,\cdots,N\}$:
\begin{align}
\label{unknowns}
X_i&=\left[ \begin{array}{cc} \check{X}_i & \bar{X}_i \\
\bar{X}_i^T & \hat{X}_i \end{array}\right], \;\;
K_i=\left[ \begin{array}{cc} \bar{K}_i & \hat{K}_i\end{array}\right], \nonumber \\
Y_i&=\left[ \begin{array}{cc} \hat{Y}_i & \bar{Y}_i \\
\bar{Y}_i^T & \check{Y}_i \end{array}\right], \;\; 
L_i=\left[ \begin{array}{c} \hat{L}_i \\ \bar{L}_i \end{array}\right]
\end{align}
where
$
 \hat{X}_i=(X_i)_{\downarrow i}^{\downarrow i},\;\;
 \hat{Y}_i=(Y_i)_{\uparrow  i}^{\uparrow  i},\;\;
 \hat{K}_i=K_iE^{\downarrow i}, \;\;
 \hat{L}_i=E_{\uparrow i}L_i$.
In particular, $X_1=\hat{X}_1, K_1=\hat{K}_1$ and $Y_N=\hat{Y}_N, L_N=\hat{L}_N$.
Matrices $\hat{K}_i$ and $\hat{L}_i$ are further partitioned as
$\hat{K}_i=\text{row}\{\hat{K}_i^a,\hat{K}_i^b\}, \hat{L}_i=\text{col}\{\hat{L}_i^b,\hat{L}_i^a\}$
where $\hat{K}_i^a=K_iE^i$ and $\hat{L}_i^a=E_iL_i$.
Also introduce
\begin{align*}
&\hat{A}_i^K\triangleq A_{\downarrow i}^{\downarrow i}+B_{\downarrow i}^{\downarrow i}\hat{K}_i, \;
\bar{A}_i^K\triangleq A_{\downarrow i}^{\uparrow i-1}+B_{\downarrow i}^{\downarrow i}\bar{K}_i \\
&\hat{A}_i^L\triangleq A_{\uparrow  i}^{\uparrow  i}+\hat{L}_{i}C_{\uparrow  i}^{\uparrow  i}, \;
\bar{A}_i^L\triangleq A_{\downarrow  i+1}^{\uparrow i}+\bar{L}_{i}C_{\uparrow  i}^{\uparrow  i} \\
& A_{1,0}^{KL}\triangleq A+BK_1, \; A_{N+1,N}^{KL}\triangleq  A+L_NC \\
&A_{i,i-1}^{KL}\triangleq A+B^{\downarrow i}K_i+L_{i-1}C_{\uparrow i-1} \text{ for } 2\leq i \leq N.
\end{align*}
Equation (\ref{rcic}) can be written as
\begin{subequations}
\label{rcicr}
\begin{align}
{A_{i,i-1}^{KL}}^TX_i+X_iA_{i,i-1}^{KL}+\Sigma_i &=0 \label{rcicr1} \\ 
K_i^T\Psi_{\downarrow i}^{\downarrow i}+X_iB^{\downarrow i}-K_{i-1}^T\Psi_{\downarrow i-1}^{\downarrow i}&=0 \label{rcicr2} 
\end{align}
\end{subequations}
where $\Sigma_i=(H^{\downarrow i}K_i-H^{\downarrow i-1}K_{i-1})^T(H^{\downarrow i}K_i-H^{\downarrow i-1}K_{i-1})$ and (\ref{rcif}) is rearranged as
\begin{subequations}
\label{rcifr}
\begin{align}
A_{i+1,i}^{KL}Y_i+Y_i{A_{i+1,i}^{KL}}^T+\Pi_i &=0 \label{rcifr1} \\ 
\Phi_{\uparrow  i}^{\uparrow  i}L_i^T+C_{\uparrow i}Y_i-\Phi_{\downarrow i}^{\downarrow i+1}L_{i+1}^T&=0 \label{rcifr2} 
\end{align}
\end{subequations}
where $\Pi_i=(L_iV_{\uparrow i}-L_{i+1}V_{\uparrow i+1})(L_iV_{\uparrow i}-L_{i+1}V_{\uparrow i+1})^T$. Now, all unknown variables can be determined by the following three-step procedure, which also visualized in Fig. \ref{figvals}.

\begin{figure}[tb]
 \begin{center}
 \includegraphics[width=0.8\linewidth, bb=0 0 700 500]{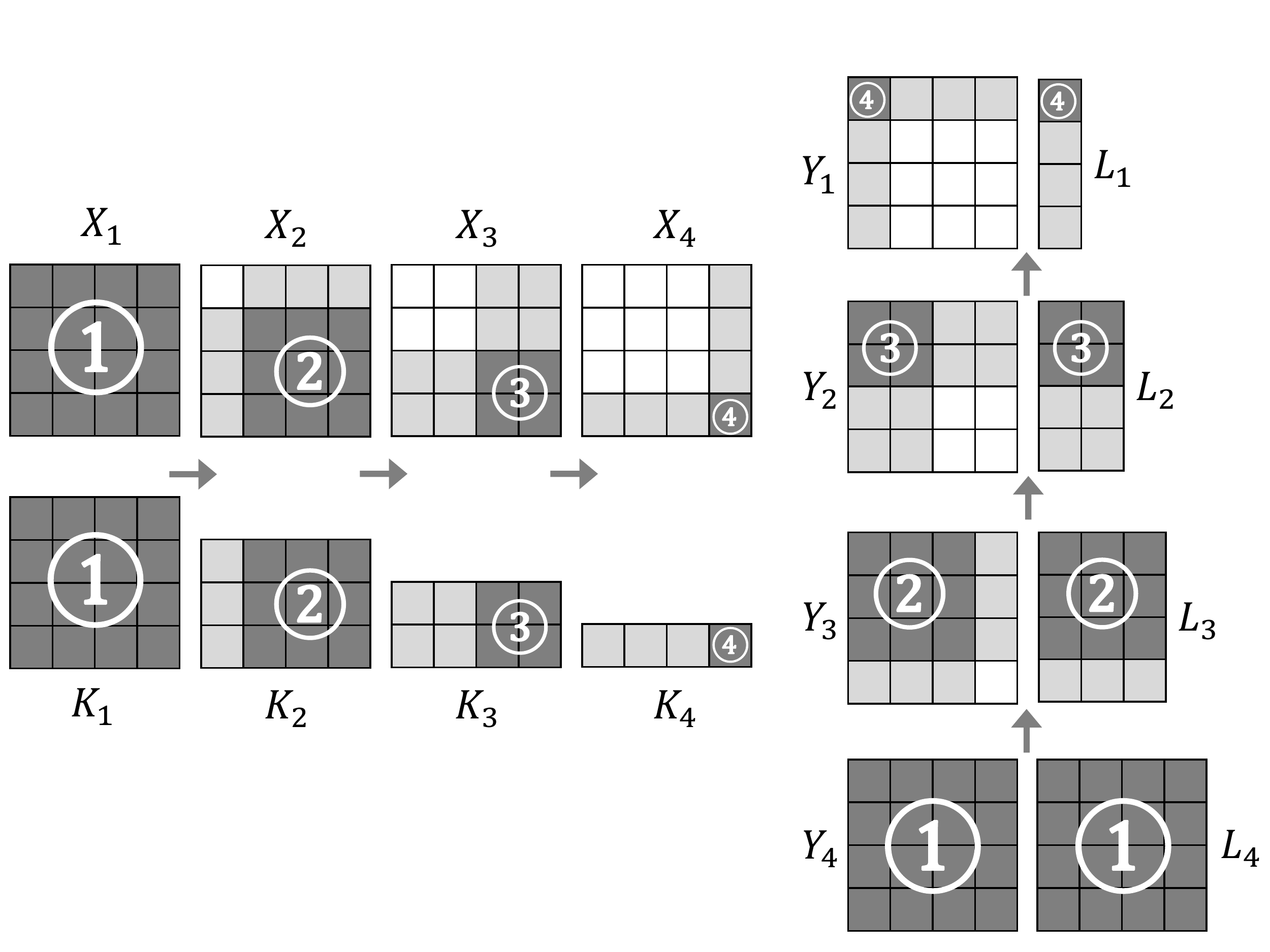}
 \caption{Sequential process to determine unknown variables. Step 1: Entries with dark gray are computed by sequentially solving Riccati subequations. Step 2 : Entries with light gray are computed at once by solving a linear system. Step 3: White entries are obtained by solving Lyapunov equations.}
 \label{figvals}
 \end{center}
 \end{figure}

\subsection{Step 1: Sequential solving of Riccati subequations}
In this step, sub-matrices indicated by `` $\hat{}$ " in (\ref{unknowns}) are determined. Notice that $\hat{X}_1,\hat{K}_1,\hat{Y}_N,\hat{L}_N$ are directly obtained by solving (\ref{rc1c}) and (\ref{rcNf}). 
To compute $\hat{X}_i,\hat{K}_i$ for $i\in\{2,3,\cdots, N\}$, focus on the lower-right $(\cdot)_{\downarrow i}^{\downarrow i}$ sub-block of (\ref{rcicr1}) and $(\cdot)_{\downarrow i}$ subblock of (\ref{rcicr2}). 
They are by themselves Riccati equations with respect to $(\hat{X}_i,\hat{K}_i)$:
\begin{equation}
(\hat{X}_i, \hat{K}_i)=ARE_p(A_{\downarrow i}^{\downarrow i},B_{\downarrow i}^{\downarrow i}, -H^{\downarrow i-1}\hat{K}_{i-1}^b, H^{\downarrow i}).
\label{rcicsub}
\end{equation}
Since the right hand side contains $\hat{K}_{i-1}^b$, these Riccati equations need to be solved in the forwarding (ascending) order on the chain. 
Similarly, $\hat{Y}_i,\hat{L}_i$ for $i\in\{1,2,\cdots,N-1\}$ can be computed by focusing on the upper-left $(\cdot)_{\uparrow  i}^{\uparrow i}$ sub-block of (\ref{rcifr1}) and $(\cdot)^{\uparrow i}$ sub-block of (\ref{rcifr2}). They are Riccati equations with respect to $(\hat{Y}_i,\hat{L}_i)$:
\begin{equation}
(\hat{Y}_i, \hat{L}_i)=ARE_d(A_{\uparrow i}^{\uparrow i},C_{\uparrow i}^{\uparrow i},-\hat{L}_{i+1}^bV_{\uparrow i+1},V_{\uparrow i}).
\label{rcifsub}
\end{equation}
Since the right hand side contains $\hat{L}_{i+1}^b$, they need to be solved in the backward (descending) order in the chain.
\begin{proposition}
\label{propstabsol}
Under Assumption \ref{asmp1}, algebraic Riccati equations (\ref{rcicsub}) and (\ref{rcifsub}) admit a unique positive semidefinite solution, which is also stabilizing.
\end{proposition}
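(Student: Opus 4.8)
The plan is to read (\ref{rcicsub}) as a standard control algebraic Riccati equation and (\ref{rcifsub}) as its dual estimation counterpart, so that by the symmetry between $ARE_p$ and $ARE_d$ it suffices to treat (\ref{rcicsub}); the statement for (\ref{rcifsub}) then follows by transposing every argument and invoking Assumption~\ref{asmp1}.3 and~\ref{asmp1}.4 in place of~\ref{asmp1}.1 and~\ref{asmp1}.2, running the induction backward along the chain. For the control equation I would apply the classical result that $ARE_p(\mathcal{A},\mathcal{B},\mathcal{F},\mathcal{H})$ has a unique stabilizing solution, which is automatically positive semidefinite because the effective state weight $\mathcal{F}^T(I-\mathcal{H}(\mathcal{H}^T\mathcal{H})^{-1}\mathcal{H}^T)\mathcal{F}\succeq 0$, provided that (a) $\Psi=\mathcal{H}^T\mathcal{H}\succ 0$, (b) $(\mathcal{A},\mathcal{B})$ is stabilizable, and (c) the pencil $\left[\begin{smallmatrix}\mathcal{A}-j\omega I & \mathcal{B}\\ \mathcal{F}&\mathcal{H}\end{smallmatrix}\right]$ has full column rank for every real $\omega$. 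With $(\mathcal{A},\mathcal{B},\mathcal{F},\mathcal{H})=(A_{\downarrow i}^{\downarrow i},B_{\downarrow i}^{\downarrow i},-H^{\downarrow i-1}\hat{K}_{i-1}^b,H^{\downarrow i})$ the whole proof reduces to checking (a)--(c). Since the data for index $i$ depends on the gain $\hat{K}_{i-1}$ computed at index $i-1$, I would argue by induction on $i$ along the chain, the base case $i=1$ being exactly (\ref{rc1c}) with the original data $(A,B,F,H)$, for which (a) is Assumption~\ref{asmp1}.1, (b) follows as below, and (c) is literally Assumption~\ref{asmp1}.2.

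Conditions (a) and (b) are structural and need no induction. For (a), $H^{\downarrow i}$ is a column-subset of $H$, which has full column rank by Assumption~\ref{asmp1}.1, so $\Psi=(H^{\downarrow i})^T H^{\downarrow i}\succ 0$. For (b), $A_{\downarrow i}^{\downarrow i}$ and $B_{\downarrow i}^{\downarrow i}$ are the trailing principal submatrices and inherit the $\mathcal{I}_{LBT}$ pattern, with diagonal blocks $(A_{jj},B_{jj})$, $j\geq i$, each stabilizable by Assumption~\ref{asmp1}.1. A PBH argument then gives stabilizability of the triangular pair: a left eigenvector of $A_{\downarrow i}^{\downarrow i}$ with $\mathrm{Re}\,\lambda\geq 0$ annihilating $B_{\downarrow i}^{\downarrow i}$ would, read from the bottom block row upward, be forced to vanish componentwise at each diagonal pair, hence be zero.

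The main obstacle is (c), because the term $\mathcal{F}=-H^{\downarrow i-1}\hat{K}_{i-1}^b$ couples index $i$ to the previously computed gain. Here I would use two facts: that $\mathcal{F}$ lies in the column space of $H$, and that the closed loop $M_{i-1}=A_{\downarrow i-1}^{\downarrow i-1}+B_{\downarrow i-1}^{\downarrow i-1}\hat{K}_{i-1}$ produced at step $i-1$ is Hurwitz (the inductive hypothesis that the step-$(i-1)$ solution is stabilizing). Writing $H^{\downarrow i-1}=[\,h_{i-1}\ \ H^{\downarrow i}\,]$ with $h_{i-1}$ the $(i-1)$-th block column, and splitting $\hat{K}_{i-1}^b=\text{col}\{\kappa,\rho\}$ into its input-block-$(i-1)$ part $\kappa$ and its remaining part $\rho$, any kernel vector $\text{col}\{\xi,v\}$ of the pencil satisfies $\mathcal{F}\xi+\mathcal{H}v=-h_{i-1}\kappa\xi+H^{\downarrow i}(v-\rho\xi)=0$. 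Since $h_{i-1}$ and $H^{\downarrow i}$ are disjoint column-blocks of the full-column-rank matrix $H$, this forces $\kappa\xi=0$ and $v=\rho\xi$, and substituting into the top block equation yields $(A_{\downarrow i}^{\downarrow i}+B_{\downarrow i}^{\downarrow i}\rho)\xi=j\omega\xi$.

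It then remains to promote this into an imaginary-axis eigenvalue of the Hurwitz matrix $M_{i-1}$, which is impossible. Using $\kappa\xi=0$ and the lower-block-triangular partition of $A_{\downarrow i-1}^{\downarrow i-1}$, $B_{\downarrow i-1}^{\downarrow i-1}$ and $\hat{K}_{i-1}$, I would check that the embedded vector $\text{col}\{0,\xi\}$ is an eigenvector of $M_{i-1}$ with eigenvalue $j\omega$: its block-$(i-1)$ image is $B_{i-1,i-1}\kappa\xi=0$, while its lower block is $(A_{\downarrow i}^{\downarrow i}+B_{\downarrow i}^{i-1}\kappa+B_{\downarrow i}^{\downarrow i}\rho)\xi=(A_{\downarrow i}^{\downarrow i}+B_{\downarrow i}^{\downarrow i}\rho)\xi=j\omega\xi$, the spurious cross-term $B_{\downarrow i}^{i-1}\kappa\xi$ again vanishing. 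As $M_{i-1}$ is Hurwitz it has no eigenvalue on $j\mathbb{R}$, so $\xi=0$ and hence $v=0$; the pencil has full column rank and (c) holds. This closes the induction and delivers a unique stabilizing positive semidefinite solution of (\ref{rcicsub}), and by duality of (\ref{rcifsub}). The delicate point throughout is the block bookkeeping that lets the full-column-rank property of $H$ separate the $h_{i-1}$ and $H^{\downarrow i}$ contributions; everything else is the standard Riccati machinery.
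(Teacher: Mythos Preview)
Your proof is correct and follows essentially the same approach as the paper's: induction along the chain, with the inductive step consisting of assuming a nonzero kernel vector of the pencil at level $i$, embedding it with a zero block on top, and deriving that the embedded vector is a $j\omega$-eigenvector of the Hurwitz closed loop $A_{\downarrow i-1}^{\downarrow i-1}+B_{\downarrow i-1}^{\downarrow i-1}\hat{K}_{i-1}$ produced at the previous step. The only cosmetic difference is that you first exploit the full column rank of $H^{\downarrow i-1}=[\,h_{i-1}\ H^{\downarrow i}\,]$ to extract $\kappa\xi=0$ and $v=\rho\xi$ and then embed $\xi$ alone, whereas the paper embeds both components $(\tilde{z}_1,\tilde{z}_2)=(\mathrm{col}\{0,\xi\},\mathrm{col}\{0,v\})$, observes they annihilate the auxiliary matrix $\left[\begin{smallmatrix}A_{\downarrow i-1}^{\downarrow i-1}-j\omega I & B_{\downarrow i-1}^{\downarrow i-1}\\ -H^{\downarrow i-1}\hat{K}_{i-1} & H^{\downarrow i-1}\end{smallmatrix}\right]$, and left-multiplies by $[\,I\ \ -B_{\downarrow i-1}^{\downarrow i-1}(\Psi_{\downarrow i-1}^{\downarrow i-1})^{-1}(H^{\downarrow i-1})^T\,]$ to collapse directly to the eigenvalue equation; your explicit PBH argument for stabilizability of the triangular pair $(A_{\downarrow i}^{\downarrow i},B_{\downarrow i}^{\downarrow i})$ also spells out a step the paper leaves implicit.
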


\subsection{Step 2: Solving a linear system}
In this step, we compute components with`` $\bar{}$ ". 
By looking at the upper-right $(\cdot)_{\uparrow i-1}^{\downarrow i}$ subblock of (\ref{rcicr1}) and the upper $(\cdot)_{\uparrow i-1}$ subblock of (\ref{rcicr2}), as well as the upper-right $(\cdot)_{\uparrow i}^{\downarrow  i+1}$ subblock of (\ref{rcifr1}) and the right $(\cdot)^{\downarrow  i+1}$ subblock of (\ref{rcifr2}), we obtain
\begin{subequations}
\label{lineqsys}
\begin{align}
&\bar{K}_i^T \Psi_{\downarrow i}^{\downarrow i}+\bar{X}_i B_{\downarrow i}^{\downarrow i}-\text{row}\{\bar{K}_{i-1}, \hat{K}_{i-1}^a\}^T\Psi_{\downarrow i-1}^{\downarrow i}=0  \label{lina} \\
&{\hat{A}_{i-1}^L\!}^T\bar{X}_i+\bar{X}_i\hat{A}_i^K+{\bar{A}_{i-1}^L\!}^T \hat{X}_i \nonumber \\
&+\text{row}\{\bar{K}_{i-1}, \hat{K}_{i-1}^a\}^T(\Psi_{\downarrow i-1}^{\downarrow i-1}\hat{K}_{i-1}^b-\Psi_{\downarrow i-1}^{\downarrow i}\hat{K}_i)=0  \label{linc} \\
&\!\!\! \text{for every } i \in\{2,3\cdots, N\} \text{ and } \nonumber \\
&\Phi_{\uparrow  i}^{\uparrow  i}\bar{L}_i^T+C_{\uparrow  i}^{\uparrow  i}\bar{Y}_i^T-\Phi_{\uparrow  i}^{\uparrow  i+1}\text{col}\{\hat{L}_{i+1}^a, \bar{L}_{i+1}\}^T=0  \label{linb} \\
&\hat{A}_i^L\bar{Y}_i+\bar{Y}_i{\hat{A}_{i+1}^K \!}^T+\hat{Y}_i{\bar{A}_{i+1}^K\!}^T  \nonumber \\
&+(\hat{L}_{i+1}^b\Phi_{\uparrow i+1}^{\uparrow i+1}-\hat{L}_i\Phi_{\uparrow i}^{\uparrow i+1})\text{col}\{\hat{L}_{i+1}^a, \bar{L}_{i+1}\}^T =0  \label{lind}
\end{align}
\end{subequations}
for every $i \in\{1,2,\cdots, N-1\}$.
Since $\hat{X}_i,\hat{K}_i,\hat{Y}_i,\hat{L}_i$ are computed in the previous step, these are  linear equations with respect to $\bar{X}_i, \bar{K}_i, i\in\{2,3,\cdots,N\}$ and $\bar{Y}_i, \bar{L}_i, i\in\{1,2,\cdots,N-1\}$.
There are precisely the same number of linear constraints as the number of real unknowns.
Unfortunately, we are currently not aware of a theoretical guarantee for the non-singularity of (\ref{lineqsys}). Hence at this point, we have to make an additional assumption:
\begin{assumption}
\label{asmp2}
The linear system (\ref{lineqsys}) with respect to $\hat{X}_i, \hat{K}_i$, $i\in\{2,3,\cdots, N\}$ and $\hat{Y}_i, \hat{L}_i$, $i\in\{1,2,\cdots, N-1\}$ admit a unique solution.
\end{assumption}
When $N=2$, it is shown in \cite{RefWorks:294} that the linear system (\ref{lineqsys}) admits a unique solution under Assumption \ref{asmp1} and thus Assumption \ref{asmp2} is unnecessary. It must be addressed in the future whether this generalizes to $N>2$.
Our numerical studies indicate that, when problem data is randomly generated to satisfy Assumption \ref{asmp1}, (\ref{lineqsys}) is usually a well-conditioned linear system.

\subsection{Step 3: Solving Lyapunov equations}
Finally, $\check{X}_i$ for $i\in\{2,3,\cdots,N\}$ and $\check{Y}_i$ for $i\in\{1,2,\cdots,N-1\}$ are computed by looking at $(\cdot)_{\uparrow i-1}^{\uparrow i-1}$ sub-block of (\ref{rcicr1}) and $(\cdot)_{\downarrow i+1}^{\downarrow i+1}$ sub-block of (\ref{rcifr1}).
\begin{subequations}
\label{lyapsys}
\begin{align}
&{\hat{A}_{i-1}^L\!}^T\check{X}_i+\check{X}_i\hat{A}_{i-1}^L+
{\bar{A}_{i-1}^L\!}^T\bar{X}_i^T+\bar{X}_i\bar{A}_{i-1}^L-
\bar{K}_i^T\Psi_{\downarrow i}^{\downarrow i}\bar{K}_i \nonumber  \\
&+\text{row}\{\bar{K}_{i-1}, {\hat{K}_{i-1}^a} \}^T\Psi_{\downarrow i-1}^{\downarrow i-1}\text{row}\{\bar{K}_{i-1}, {\hat{K}_{i-1}^a} \}=0 \nonumber \\
& i\in\{2,3,\cdots,N\}\\
&\hat{A}_{i+1}^K\check{Y}_i+\check{Y}_i{\hat{A}_{i+1}^K\!}^T+
\bar{A}_{i+1}^K\bar{Y}_i+\bar{Y}_i^T{\bar{A}_{i+1}^K\!}^T-\bar{L}_i\Phi_{\uparrow i}^{\uparrow i}\bar{L}_i^T \nonumber \\
&+\text{col}\{ \hat{L}_{i+1}^a, \bar{L}_{i+1} \}
\Phi_{\uparrow i+1}^{\uparrow i+1}\text{col}\{ \hat{L}_{i+1}^a, \bar{L}_{i+1} \}^T=0 \nonumber \\ & i\in\{1,2,\cdots,N-1\}
\end{align}
\end{subequations}
Since all other quantities are known by the previous step, these are Lyapunov equations with respect to $\check{X}_i$ and $\check{Y}_i$, which can be easily solved.
Since $\hat{A}_{i}^K$ and  $\hat{A}_{i}^L$ are Hurwitz stable (guaranteed by Proposition \ref{propstabsol}), they admit a unique solution.

\begin{proposition}
\label{proprcsol}
Under Assumption \ref{asmp1} and \ref{asmp2}, the set of algebraic Riccati equations (\ref{coupledrc}) admit a unique tuple of positive semidefinite solutions $X_i, Y_i$, $i\in\{1,2,\cdots,N\}$. Moreover, they are stabilizing solutions.
\end{proposition}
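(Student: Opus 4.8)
The plan is to prove the proposition by showing that the three-step procedure of Section~\ref{seccoupledrc} is a correct and complete solution method, so that existence, uniqueness, semidefiniteness, and the stabilizing property all follow from the properties established for the individual steps. The starting point is that (\ref{coupledrc}) is equivalent to the rearranged systems (\ref{rcicr}) and (\ref{rcifr}), and that partitioning each of these matrix equations into its $(\cdot)_{\downarrow i}^{\downarrow i}$, $(\cdot)_{\uparrow i-1}^{\downarrow i}$, and $(\cdot)_{\uparrow i-1}^{\uparrow i-1}$ sub-blocks (and dually for the $Y$-equations) produces exactly the block equations collected in Step~1, Step~2, and Step~3. I would first confirm that this partition is exhaustive --- every scalar equation appears once --- and that the induced dependencies are acyclic: the $\hat{}$-blocks (Step~1) feed the $\bar{}$-blocks (Step~2), which in turn feed the $\check{}$-blocks (Step~3), with no back-coupling. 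This reduces the coupled nonlinear system to a forward substitution whose individual stages are already understood.

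The key structural step is to establish that the closed-loop matrices are Hurwitz. Partitioning $A_{i,i-1}^{KL}=A+B^{\downarrow i}K_i+L_{i-1}C_{\uparrow i-1}$ into its $\uparrow(i-1)$ and $\downarrow i$ row/column blocks, the lower-block-triangular structure of $A,B,C\in\mathcal{I}_{LBT}$ forces the upper-right block to vanish, while the two diagonal blocks are precisely $\hat{A}_{i-1}^L$ and $\hat{A}_i^K$. By Proposition~\ref{propstabsol} both of these are Hurwitz, so the resulting block-triangular matrix $A_{i,i-1}^{KL}$ has spectrum equal to the union of the spectra of $\hat{A}_{i-1}^L$ and $\hat{A}_i^K$, hence is Hurwitz; the dual computation shows $A_{i+1,i}^{KL}$ is Hurwitz as well, with the edge cases $A_{1,0}^{KL}=\hat{A}_1^K$ and $A_{N+1,N}^{KL}=\hat{A}_N^L$ covered directly. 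This single fact delivers the stabilizing property claimed in the proposition, guarantees the unique solvability of the Lyapunov equations (\ref{lyapsys}) in Step~3, and is also the engine for positive semidefiniteness.

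For existence I would run the procedure forward: Step~1 yields unique, positive semidefinite, stabilizing $\hat{}$-blocks by Proposition~\ref{propstabsol}; Step~2 yields unique $\bar{}$-blocks by Assumption~\ref{asmp2}; Step~3 yields unique $\check{}$-blocks since the relevant diagonal blocks are Hurwitz. Assembling these into $X_i,K_i,Y_i,L_i$ and invoking the block-exhaustiveness from the first paragraph shows the assembled matrices satisfy the full systems (\ref{rcicr}) and (\ref{rcifr}), hence (\ref{coupledrc}). To obtain $X_i\succeq 0$, I would freeze the already-determined $K_i,L_{i-1}$ and read (\ref{rcicr1}) as a genuine Lyapunov equation $(A_{i,i-1}^{KL})^{T}X_i+X_iA_{i,i-1}^{KL}+\Sigma_i=0$ with Hurwitz $A_{i,i-1}^{KL}$ and forcing $\Sigma_i=(H^{\downarrow i}K_i-H^{\downarrow i-1}K_{i-1})^T(H^{\downarrow i}K_i-H^{\downarrow i-1}K_{i-1})\succeq 0$; its unique solution $X_i=\int_0^\infty e^{(A_{i,i-1}^{KL})^{T}t}\,\Sigma_i\, e^{A_{i,i-1}^{KL}t}\,dt\succeq 0$ then coincides, by uniqueness, with the assembled $X_i$. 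The dual argument with $\Pi_i\succeq 0$ gives $Y_i\succeq 0$.

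Uniqueness among positive semidefinite tuples follows by reversing the substitution. Given any PSD solution, each $\hat{X}_i=(X_i)_{\downarrow i}^{\downarrow i}$ is a principal submatrix of a PSD matrix, hence PSD, and it solves the Step-1 Riccati equation (\ref{rcicsub}); by the uniqueness clause of Proposition~\ref{propstabsol}, together with an induction on $i$ that propagates $\hat{K}_{i-1}^b$ (and dually $\hat{L}_{i+1}^b$), the $\hat{}$-blocks are forced, then Assumption~\ref{asmp2} forces the $\bar{}$-blocks, and the Lyapunov equations force the $\check{}$-blocks, so the entire tuple is uniquely determined. The main obstacle is the bookkeeping underlying the first two paragraphs: one must verify carefully, using the $\mathcal{I}_{LBT}$ sparsity and the $\uparrow i/\downarrow i$ index algebra, both that the block partition of (\ref{rcicr}) and (\ref{rcifr}) is exhaustive --- so that solving the blocks really solves the undecomposed equations --- and that the off-diagonal block of $A_{i,i-1}^{KL}$ genuinely vanishes so the triangular spectrum argument applies. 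Once this triangular structure is secured, the remaining steps are standard Lyapunov and Riccati facts.
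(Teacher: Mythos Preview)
Your proposal is correct and follows essentially the same approach as the paper: both arguments hinge on exhibiting $A_{i,i-1}^{KL}$ as block lower-triangular with Hurwitz diagonal blocks $\hat{A}_{i-1}^L$ and $\hat{A}_i^K$ (guaranteed by Proposition~\ref{propstabsol}), and then reading (\ref{rcicr1}) as a Lyapunov equation with positive semidefinite forcing $\Sigma_i$ to obtain $X_i\succeq 0$ and the stabilizing property. Your treatment of uniqueness---propagating through the three steps via the PSD-ness of principal submatrices and induction on $i$---is more explicit than the paper's terse appeal to the algorithm producing ``a unique set of variables,'' but the underlying idea is the same.
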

\begin{proof}
It is clear from Theorem \ref{theorceq} that (\ref{rc1c}) and (\ref{rcNf}) admit unique positive semidefinite solutions which are stabilizing. To see why solutions constructed in Step 1, 2 and 3 above are positive semidefinite, notice that under Assumption \ref{asmp1} and \ref{asmp2}, the algorithm produces a unique set of variables satisfying (\ref{rcicr1}) and (\ref{rcifr1}). 
Furthermore, notice that
$$
A_{i,i-1}^{KL}=\left[ \begin{array}{cc} 
\hat{A}_{i-1}^L & 0  \\
A_{\downarrow  i}^{\uparrow i-1}+B_{\downarrow i}^{\downarrow i}\bar{K}_i+\bar{L}_{i-1}C_{\uparrow i-1}^{\uparrow i-1} &
\hat{A}_{i}^K
\end{array}\right]
$$
is a stable matrix since its diagonal blocks are stable. 
Hence, due to the inertia property of a Lyapunov equation, $X_i$ and $Y_i$ must be positive semidefinite. They are indeed stabilizing solutions since $A_{i,i-1}^{KL}$ is a stable matrix.
\end{proof}

\section{Derivation of Main Result}
\label{secopt}
\subsection{Stability}
Notice that the closed-loop transfer function is given by
\begin{align}
\mathbf{G}^{cl}&=
\left[ \begin{array}{cc}
\mathbf{G}^{cl}_{11} & \mathbf{G}^{cl}_{12} \\
\mathbf{G}^{cl}_{21} & \mathbf{G}^{cl}_{22} \end{array}\right]
=\left[ \begin{array}{c|cc}
\mathcal{A} & \mathcal{W} & \mathcal{B} \\ \hline  
\mathcal{F} & 0 & H \\
\mathcal{C} & V & 0
\end{array}\right] \nonumber \\
&\triangleq
\left[ \begin{array}{cc|cc} 
A_K & B_KC & B_KV & 0 \\
BC_K & A & W& B\\ \hline 
HC_K & F & 0 & H \\
0 & C & V & 0
\end{array}\right]. \label{cltf}
\end{align}
To see that $\mathbf{K}_{opt}$ is a stabilizing controller, we need to verify that $\mathcal{A}$ is a stable matrix.
Let $\bar{\zeta}$ be an $n(N+1)$-dimensional square matrix whose $(i,j)$-th sub-block is $I_n$ if $i\geq j$ and is zero otherwise. Also define $\bar{\mu}=\bar{\zeta}^{-1}$.
One can easily check that a similarity transformation gives
\begin{equation}
\label{muAzeta}
\bar{\mu}\mathcal{A}
\bar{\zeta}
=
\left[ \begin{array}{ccccc}
A^{KL}_{1,0} \!&\! * \!&\! * \!&\!* \!&\! * \\
\!&\! A^{KL}_{2,1} \!&\! * \!&\!* \!&\! *  \\ [-1ex] 
\!&\!\!&\! \ddots \!&\! * \!&\! *  \\
\!&\!\!&\!\!&\! A^{KL}_{N,N-1} \!&\!* \\
0\!&\!\!&\!\!&\!\!&\! A^{KL}_{N+1,N} \end{array}\right].
\end{equation}
This is a block upper-triangular matrix. Moreover, all diagonal blocks are stable matrices as we saw in the proof of Proposition \ref{proprcsol}. This shows the stability of $\mathcal{A}$.

\subsection{Optimal Controller Characterization}
\label{secchar}

\begin{figure}[t]
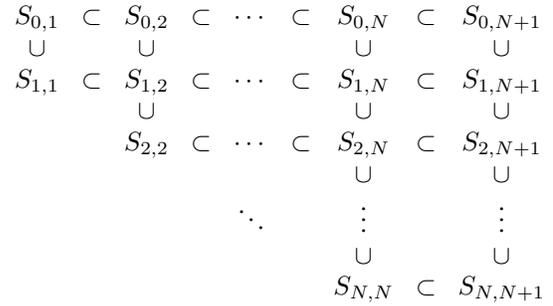

\centering
\begin{equation*}
\begin{array}{ccccccccc}
S_{0,1} & \!\subset\! & S_{0,2} & \!\subset\! & \cdots  & \!\subset\! & S_{0,N} & \!\subset\! & S_{0,N+1} \\
\cup & & \cup & & &  & \cup & & \cup  \\
S_{1,1} & \!\subset\! & S_{1,2} & \!\subset\! & \cdots & \!\subset\! & S_{1,N} & \!\subset\! & S_{1,N+1} \\
 & & \cup & & &  & \cup & & \cup  \\
 & & S_{2,2} & \!\subset\! & \cdots &  \!\subset\! & S_{2,N} & \!\subset\! & S_{2,N+1} \\
 & & & & &  & \cup & & \cup \\
 & & & & \ddots &   & \vdots &  & \vdots \\
 & &  & & &  & \cup & & \cup \\
&&&&&&   S_{N,N} & \!\subset\! & S_{N,N+1}
 \end{array}
\end{equation*}
\caption{Inclusion relation diagram among subspaces $S_{i,j}$ defined by (\ref{defsij}). For convenience, we also define $S_{0,j}\triangleq \{\mathbf{Q}E_{\uparrow j}\mathbf{G}_{21}^{cl}:\mathbf{Q}\in H_2\}$, $S_{i,N+1}\triangleq \{\mathbf{G}_{12}^{cl}E^{\downarrow i}\mathbf{Q}:\mathbf{Q}\in H_2\}$, $S_{0,N+1}\triangleq H_2$.}
\label{figdiagram}
\vspace{.1in}
\end{figure}

\begin{figure*}[t]
\centering
\begin{align}
&\mathcal{K}_i\triangleq E_{\downarrow i}\mbox{row}\{0,\cdots,0,\overset{i\text{-th block}}{E^{\downarrow i+1}K_{i+1}-E^{\downarrow i}K_i},\cdots,\overset{(N-1)\text{-th block}}{E^{\downarrow N}K_N-E^{\downarrow N-1}K_{N-1}}, \overset{N\text{-th block}}{-E^{\downarrow N}K_N},\overset{(N+1)\text{-th block}}{E^{\downarrow i}K_i}\} \label{calK}\\
&\mathcal{L}_j\triangleq \mbox{col}\{\overset{1\text{st block}}{L_1E_{\uparrow  1}},\cdots,\overset{(j-1)\text{-th block}}{L_{j-1}E_{\uparrow j-1}}, \overset{j\text{-th block}}{L_{j}E_{\uparrow j}},\cdots,\overset{N\text{-th block}}{L_{j}E_{\uparrow j}},\overset{(N+1)\text{-th block}}{L_{j}E_{\uparrow j}}\}E^{\uparrow j} \label{calL} \\
&\tilde{J}_i\triangleq \mbox{row}\{0,\cdots,0,\overset{(i-1)\text{-th block}}{I}, 0,\cdots,0,\overset{(N+1)\text{-th block}}{-I}\}, \hat{J}_j\triangleq \mbox{col}\{0,\cdots,0,\overset{(j+1)\text{-th block}}{I},\cdots,\overset{(N+1)\text{-th block}}{I}\} \label{calJ}
\end{align}
\vspace{-.07in}
\hrule
\begin{align}
\label{defui}
\mathbf{U}_i&\!\triangleq \!\left[\! \begin{array}{c|c}
A^{KL}_{i,i-1} \!&\!  B_{U_i} \\ \hline
C_{U_i} \!&\! D_{U_i}\end{array}\!\right], \;
\mathbf{M}_i^{-1}\!\triangleq\!\left[\! \begin{array}{c|c} 
\mathcal{A} \!&\! \mathcal{B}^{\downarrow i} \\ \hline
-{\Psi_{\downarrow i}^{\downarrow i}}^\frac{1}{2}\mathcal{K}_i \!&\! {\Psi_{\downarrow i}^{\downarrow i}}^\frac{1}{2}\end{array}\!\right] \forall i\in\{1,2,\cdots,N\} 
 \text{ where } B_{U_i}=B^{\downarrow i}{\Psi_{\downarrow i}^{\downarrow i}}^{-\frac{1}{2}}, \\
& C_{U_i}\!\!=\!\! \begin{cases} 
F+HK_1 &\mbox{if } i=1 \\
{\Psi_{\downarrow i-1}^{\downarrow i-1}}^\frac{1}{2}E_{\downarrow i-1}(E^{\downarrow i}K_i\!\!-\!\!E^{\downarrow i-1}K_{i-1}) &\mbox{if } 2\leq i \leq N  \end{cases}, \;
D_{U_i} \!\!=\!\! \begin{cases} 
H\Psi^{-\frac{1}{2}} &\mbox{if } i=1 \\
{\Psi_{\downarrow i-1}^{\downarrow i-1}}^\frac{1}{2}E_{\downarrow i-1}E^{\downarrow i}{\Psi_{\downarrow i}^{\downarrow i}}^{-\frac{1}{2}} &\mbox{if } 2\leq i \leq N.  \end{cases} \nonumber \\
\label{defvj}
\mathbf{V}_j&\!\triangleq\!\left[ \begin{array}{c|c}
A^{KL}_{j+1,j} \!&\!  B_{V_j} \\ \hline
C_{V_j} \!&\! D_{V_j}\end{array}\right], \;
\mathbf{N}_i^{-1}\!\triangleq\!\left[ \begin{array}{c|c} 
\mathcal{A} \!&\! -\mathcal{L}_j{\Phi_{\uparrow j}^{\uparrow j}}^{\frac{1}{2}} \\ \hline
\mathcal{C}_{\uparrow j} \!&\! {\Phi_{\uparrow j}^{\uparrow j}}^{\frac{1}{2}}\end{array}\right]
\forall j\in\{1,2,\cdots,N\} \text{ where } C_{V_j}={\Phi_{\uparrow  j}^{\uparrow  j}}^{-\frac{1}{2}}C_{\uparrow j}, \\
&B_{V_j} \!\!=\!\! \begin{cases} 
W+L_NV &\mbox{if } j=N \\
(L_jE_{\uparrow j}\!\!-\!\!L_{j+1}E_{\uparrow j+1})E^{\uparrow j+1}{\Phi_{\uparrow j+1}^{\uparrow j+1}}^{\frac{1}{2}} & \!\mbox{if } 1\! \leq \! j \!\leq \! N\!-\!1 \end{cases}, 
D_{V_j} \!\!=\!\! \begin{cases} 
\Phi^{-\frac{1}{2}}V & \!\mbox{if } i=N \\
{\Phi_{\uparrow j}^{\uparrow j}}^{-\frac{1}{2}}E_{\uparrow j}E^{\uparrow j+1}{\Phi_{\uparrow j+1}^{\uparrow j+1}}^{\frac{1}{2}} &\mbox{if } 1\leq i \leq N-1.  \end{cases} \nonumber
\end{align}
\vspace{-.07in}
\hrule
\end{figure*}

If $\mathbf{K}_{opt}$ is the optimal solution to Problem \ref{h2controlproblem}, then any perturbation $\mathbf{K}=\mathbf{K}_{opt}+\mathbf{K}'$ such that $\mathbf{K}'\in\mathcal{I}_{LBT}$ only degrades control performance. 
Due to the uniqueness of the rational solution to Problem \ref{h2controlproblem}, it is sufficient for us to show that $\mathbf{K}'=0$ is the optimal solution to the modified $H_2$ optimal control problem
\begin{align*}
\min & \;\; \|\mathbf{G}^{cl}_{11}+\mathbf{G}^{cl}_{12}\mathbf{K}'(I-\mathbf{G}^{cl}_{22}\mathbf{K}')^{-1}\mathbf{G}^{cl}_{21}\| \\
\text{s.t. } & \;\; \mathbf{K}' \text{ is stabilizing and } \mathbf{K}'\in \mathcal{I}_{LBT}.
\end{align*}
Since $\mathbf{G}^{cl}_{22}\in H_2 \cap \mathcal{I}_{LBT}$, the subspace $\mathcal{I}_{LBT}$ is quadratically invariant under $\mathbf{G}^{cl}_{22}$. 
This means that all stabilizing controllers are parametrized by the structured Youla parameter $\mathbf{Q}\triangleq -\mathbf{K}'(I-\mathbf{G}^{cl}_{22}\mathbf{K}')^{-1} \in H_\infty\cap \mathcal{I}_{LBT}$.
Hence, the above statement is equivalent to that $\mathbf{Q}=0$ is the optimal solution to the model matching problem\footnote{The condition $\mathbf{Q} \in H_\infty \cap \mathcal{I}_{LBT}$ can be replaced by $\mathbf{Q} \in H_2 \cap \mathcal{I}_{LBT}$ without loss of generality. Recall that under Assumption \ref{asmp1}, $H$ has full column rank, $V$ has full row rank, and $\mathbf{G}_{11}^{cl}\in H_2$. This means that $\mathbf{Q}$ must be in $H_2$ so that the value of (\ref{qlbtptb}) is bounded.}
\begin{align}
\min & \;\; \|\mathbf{G}^{cl}_{11}-\mathbf{G}^{cl}_{12}\mathbf{Q}\mathbf{G}^{cl}_{21}\| \label{qlbtptb} \\
\text{s.t.} & \;\; \mathbf{Q} \in H_2 \cap \mathcal{I}_{LBT}. \nonumber
\end{align}
Since the non-rectangular constraint $\mathcal{I}_{LBT}$ is inconvenient to work with, we use an alternative characterization of the same statement using rectangular blocks $\mathcal{I}^{\uparrow i}_{\downarrow i}$, $i\in\{1,2,\cdots,N\}$.
\begin{proposition}
\label{prop1}
$\mathbf{Q}=0$ is the optimal solution to (\ref{qlbtptb}) if and only if for every $i\in\{1,2,\cdots,N\}$, $\mathbf{Q}_i=0$ is the optimal solution to the model matching problem
\begin{align}
\min & \;\; \|\mathbf{G}^{cl}_{11}-\mathbf{G}^{cl}_{12}\mathbf{Q}_i\mathbf{G}^{cl}_{21}\| \label{modelmatchi}  \\
\text{s.t.} & \;\; \mathbf{Q}_i \in H_2 \cap \mathcal{I}^{\uparrow i}_{\downarrow i}. \nonumber 
\end{align}
\end{proposition}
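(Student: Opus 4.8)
The plan is to recast optimality of $\mathbf{Q}=0$ as a first-order (variational) condition via the projection theorem, and then to reduce the triangular constraint to the rectangular ones through a decomposition of the sparsity subspace. First I would note that both (\ref{qlbtptb}) and (\ref{modelmatchi}) minimize the squared $H_2$ norm of the affine map $\mathbf{Q}\mapsto \mathbf{G}^{cl}_{11}-\mathbf{G}^{cl}_{12}\mathbf{Q}\mathbf{G}^{cl}_{21}$ over a closed linear subspace of $H_2$; here I use that $\mathbf{G}^{cl}_{12},\mathbf{G}^{cl}_{21}\in H_\infty$ (since $\mathcal{A}$ is stable, as shown above), so that the image stays in $H_2$. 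Because the objective is convex and the feasible set is a subspace, the Hilbert-space projection theorem says that $\mathbf{Q}=0$ attains the minimum of (\ref{qlbtptb}) if and only if the residual at $\mathbf{Q}=0$, namely $\mathbf{G}^{cl}_{11}$, is orthogonal to the admissible variation space:
\begin{equation*}
\langle \mathbf{G}^{cl}_{11},\, \mathbf{G}^{cl}_{12}\,\delta\mathbf{Q}\,\mathbf{G}^{cl}_{21}\rangle = 0 \quad \text{for all } \delta\mathbf{Q}\in H_2\cap\mathcal{I}_{LBT}.
\end{equation*}
The inner product here is real-valued because every factor is real-rational, so the functional $\delta\mathbf{Q}\mapsto \langle \mathbf{G}^{cl}_{11}, \mathbf{G}^{cl}_{12}\,\delta\mathbf{Q}\,\mathbf{G}^{cl}_{21}\rangle$ is real-linear in $\delta\mathbf{Q}$. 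The same argument characterizes optimality of $\mathbf{Q}_i=0$ for (\ref{modelmatchi}) by the identical orthogonality condition taken over all $\delta\mathbf{Q}_i\in H_2\cap\mathcal{I}^{\uparrow i}_{\downarrow i}$.

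The crux is then a purely combinatorial statement about the block patterns, namely $\mathcal{I}_{LBT}=\sum_{i=1}^N \mathcal{I}^{\uparrow i}_{\downarrow i}$, which I would verify at the level of block indices. The $(a,b)$ block lies in $\mathcal{I}^{\uparrow i}_{\downarrow i}$ exactly when $a\geq i$ and $b\leq i$; ranging over $i$, such an $i$ exists precisely when $b\leq a$, which is the lower-block-triangular pattern. Hence each $\mathcal{I}^{\uparrow i}_{\downarrow i}\subseteq\mathcal{I}_{LBT}$, and every lower-triangular block can be assigned to at least one piece. Intersecting block-by-block with $H_2$ (each block of an $H_2$ matrix function is itself $H_2$), this gives $H_2\cap\mathcal{I}_{LBT}=\sum_{i=1}^N \bigl(H_2\cap\mathcal{I}^{\uparrow i}_{\downarrow i}\bigr)$.

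Finally I would combine the two ingredients by linearity. For the forward direction, the inclusion $\mathcal{I}^{\uparrow i}_{\downarrow i}\subseteq\mathcal{I}_{LBT}$ shows immediately that if the orthogonality condition holds on all of $\mathcal{I}_{LBT}$ then it holds on each piece, so $\mathbf{Q}_i=0$ is optimal for every $i$. For the converse, any $\delta\mathbf{Q}\in H_2\cap\mathcal{I}_{LBT}$ decomposes as $\delta\mathbf{Q}=\sum_{i=1}^N\delta\mathbf{Q}_i$ with $\delta\mathbf{Q}_i\in H_2\cap\mathcal{I}^{\uparrow i}_{\downarrow i}$, and real-linearity makes the variation functional vanish as a sum of zeros; thus the orthogonality condition holds on $\mathcal{I}_{LBT}$ and $\mathbf{Q}=0$ is optimal. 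Under the uniqueness of the rational minimizer noted in the introduction, ``attains the minimum'' coincides with ``is the optimal solution,'' completing the equivalence. The steps using the projection theorem and linearity are routine; the main point to handle with care is that $\mathcal{I}_{LBT}=\sum_i\mathcal{I}^{\uparrow i}_{\downarrow i}$ is a sum with overlapping pieces rather than a direct sum, but since the converse only requires the \emph{existence} of a decomposition of $\delta\mathbf{Q}$, the overlaps are harmless.
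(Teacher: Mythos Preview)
Your argument is correct. The paper states Proposition~\ref{prop1} without proof, so there is no explicit proof to compare against; however, the decomposition $\mathcal{I}_{LBT}=\sum_{i=1}^N \mathcal{I}_{\downarrow i}^{\uparrow i}$ that you use is exactly the one the authors allude to in the caption of Fig.~\ref{figposet} (``Another valid interpretation is to view $\mathbf{K}\in\mathcal{I}_{LBT}$ as a sum of local controllers $\mathbf{K}_i\in\mathcal{I}_{\downarrow i}^{\uparrow i}$''), and the subsequent use of the projection theorem immediately after the proposition makes clear this is the intended argument. Your handling of the only delicate point---that the rectangular pieces overlap, so one only gets a sum and not a direct sum, but that existence of \emph{some} decomposition of $\delta\mathbf{Q}$ suffices for the linear orthogonality condition---is exactly right.
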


One approach to find a solution to the model matching problem (\ref{modelmatchi}) is to apply the projection theorem \cite{RefWorks:295}. 
Define subspaces $S_{i,j}$ of $H_2$ for $1\leq i \leq j \leq N$ by
\begin{equation}
\label{defsij}
S_{i,j}\triangleq \{\mathbf{G}_{12}^{cl}E^{\downarrow i}\mathbf{Q}E_{\uparrow i}\mathbf{G}_{21}^{cl}: \mathbf{Q}\in H_2 \}.
\end{equation}
Proposition \ref{prop1} implies that, in order to infer that $\mathbf{K}_{opt}$ is the optimal controller, it suffices to prove that $\mathbf{Q}=0$ is the minimizer of $\|\mathbf{G}_{11}^{cl}-\mathbf{G}_{12}^{cl}E^{\downarrow i}\mathbf{Q}E_{\uparrow i}\mathbf{G}_{21}^{cl}\|$ over $\mathbf{Q}\in H_2$ for every $i\in\{1,2,\cdots,N\}$. Equivalently, it needs to be shown that $\pi_{S_{i,i}}(\mathbf{G}_{11}^{cl})=0$ for every $i\in\{1,2,\cdots,N\}$, where $\pi_{S_{i,j}}:H_2\rightarrow S_{i,j}$ is the projection operator.

\subsection{Nested Projections}
\label{secproj}
It is clear that the inclusion relations in Fig. \ref{figdiagram} hold among subspaces defined by (\ref{defsij}). We are going to exploit this diagram to find an explicit representation of $\pi_{S_{i,j}}(\mathbf{G}_{11}^{cl})$. Recall the following fact:
\begin{theorem}(Nested Projections, see e.g., \cite{RefWorks:307})
\label{theonproj}
Let $S_1, S_2,\cdots, S_N$ be subspaces of a Hilbert space such that $S_N\subset S_{N-1}\subset \cdots S_2 \subset S_1$. Then $\pi_{S_N}=\pi_{S_N}\circ \pi_{S_{N-1}}\circ \cdots \circ \pi_{S_2} \circ \pi_{S_1}$.
\end{theorem}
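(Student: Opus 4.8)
The plan is to reduce the $N$-fold identity to a single two-subspace statement and then iterate. The crucial lemma is: if $T\subseteq S$ are closed subspaces of a Hilbert space, then $\pi_T\circ\pi_S=\pi_T$; projecting onto the larger space first and then onto the smaller one is the same as projecting directly onto the smaller one. Granting this, the theorem follows immediately by induction. Writing $P_k\triangleq\pi_{S_k}\circ\pi_{S_{k-1}}\circ\cdots\circ\pi_{S_1}$, we have $P_1=\pi_{S_1}$; and assuming $P_k=\pi_{S_k}$, we obtain $P_{k+1}=\pi_{S_{k+1}}\circ P_k=\pi_{S_{k+1}}\circ\pi_{S_k}=\pi_{S_{k+1}}$, where the last equality uses $S_{k+1}\subseteq S_k$ together with the lemma. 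Setting $k=N$ gives $P_N=\pi_{S_N}$, which is exactly the claimed factorization.

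To prove the lemma I would argue directly from the orthogonal decomposition characterizing projections. Fix an arbitrary $x$ in the Hilbert space and write $x=\pi_S x+r$ with residual $r=x-\pi_S x\perp S$. Since $T\subseteq S$, this residual is also orthogonal to $T$. Now decompose
\[
x-\pi_T(\pi_S x)=(x-\pi_S x)+(\pi_S x-\pi_T\pi_S x),
\]
where the first summand lies in $S^\perp\subseteq T^\perp$ and the second lies in $T^\perp$ by the defining property of $\pi_T$ applied to $\pi_S x$. Hence $x-\pi_T\pi_S x\in T^\perp$, while $\pi_T\pi_S x\in T$. By uniqueness of the orthogonal projection, i.e.\ the unique point of $T$ whose residual is orthogonal to $T$, this forces $\pi_T\pi_S x=\pi_T x$. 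As $x$ was arbitrary, $\pi_T\circ\pi_S=\pi_T$.

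I do not expect a genuine obstacle here: the result is elementary once the subspaces are known to be closed, so that the projections are well defined and the decomposition $H=T\oplus T^\perp$ is available. The only delicate point in the application is the closedness of the subspaces $S_{i,j}$ defined in (\ref{defsij}), which are images of $H_2$ under composition with $\mathbf{G}_{12}^{cl}$ and $\mathbf{G}_{21}^{cl}$; however, closedness is a standing hypothesis of the abstract theorem rather than part of its proof, so I would simply invoke it and rely on the inclusion chain of Fig.\ \ref{figdiagram} to supply the nesting $S_N\subset\cdots\subset S_1$. The entire content of the argument is the two-line orthogonality computation above; the rest is bookkeeping carried out by the induction.
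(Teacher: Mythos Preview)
Your argument is correct and is the standard proof of this elementary fact: the two-subspace lemma $\pi_T\circ\pi_S=\pi_T$ for $T\subseteq S$ follows exactly as you wrote from the orthogonal decomposition and uniqueness of the projection, and the general statement is then a trivial induction. Note, however, that the paper does not actually supply a proof of this theorem; it is quoted as a known result with a reference, so there is no ``paper's own proof'' to compare against. Your remark about closedness of the $S_{i,j}$ is well taken but, as you say, belongs to the application rather than to the abstract statement.
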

According to Fig. \ref{figdiagram}, Theorem \ref{theonproj} suggests that $\pi_{S_{i,i}}$ can be computed as, for instance,
\begin{equation}
\pi_{S_{i,i}}=\pi_{S_{i,i}}\circ \pi_{S_{i,i+1}} \circ \cdots \circ \pi_{S_{i,N}} \circ \pi_{S_{i-1,N}} \circ \cdots \circ \pi_{S_{1,N}}. \label{sijsuccessive}
\end{equation}
Understanding $\pi_{S_{i,i}}$ as a composition of stepwise projections is convenient in the following presentation, since each projection step can be associated with one of $2N$ Riccati equations in (\ref{coupledrc}). To be precise, we consider writing $S_{i,j}$ using an ``orthonormal" basis. 
Recall that a rational function $\mathbf{U}\in H_\infty$ is said to be \emph{inner} if $\mathbf{U}^*\mathbf{U}=I$ and \emph{co-inner} if $\mathbf{U}\mathbf{U}^*=I$.
It turns out that each subspace can be written as 
\begin{equation}
\label{sijuv}
S_{i,j}=\{\mathbf{U}_1\cdots \mathbf{U}_i\mathbf{M}_i^{-1}\mathbf{Q}\mathbf{N}_j^{-1}\mathbf{V}_j\cdots \mathbf{V}_N: \mathbf{Q}\in H_2\}
\end{equation}
where the explicit form of inner functions $\mathbf{U}_1,\cdots,\mathbf{U}_N$, co-inner functions $\mathbf{V}_1,\cdots, \mathbf{V}_N$ and other necessary quantities are given in (\ref{calK})-(\ref{defvj}). The above expression is obtained by repeated applications of a particular type of spectral factorizations (Lemma \ref{lemUM} in Appendix \ref{proof:lempij}). Each application of the factorization requires a solution to one of the Riccati equations in (\ref{coupledrc}).

Writing $S_{i,j}$ in the form of (\ref{sijuv}) makes nested projections easier.
Suppose that the projection of $\mathbf{G}_{11}^{cl}$ onto $S_{i,j}$ can be written in the form of
$$
\pi_{S_{i,j}}(\mathbf{G}_{11}^{cl})=\mathbf{U}_1\cdots \mathbf{U}_i\tilde{\mathbf{P}}_i \mathbf{V}_j\cdots \mathbf{V}_N \in S_{i,j}
$$
for some $\tilde{\mathbf{P}}_i$. Then it is easy to check that the subsequent projection is given by 
$$
\pi_{S_{i+1,j}}(\mathbf{G}_{11}^{cl})=\mathbf{U}_1\cdots \mathbf{U}_{i+1}\tilde{\mathbf{P}}_{i+1} \mathbf{V}_j\cdots \mathbf{V}_N \in S_{i+1,j}
$$
where $\tilde{\mathbf{P}}_{i+1}$ is chosen to satisfy the optimality condition 
$$
\left<\mathbf{U}_{i+1}^*\tilde{\mathbf{P}}_i-\tilde{\mathbf{P}}_{i+1}, \mathbf{M}_{i+1}^{-1}\mathbf{Q}\mathbf{N}_{j}^{-1}\right>=0 \; \forall \mathbf{Q}\in H_2.
$$
Details can be found in Lemma \ref{lemproj} in Appendix \ref{proof:lempij}. Also, notice that every projection generates a ``residual term" as
\begin{align*}
&\pi_{S_{i,j}}\!(\mathbf{G}_{11}^{cl})\!\!=\!\!\pi_{S_{i+1,j}}\!(\mathbf{G}_{11}^{cl})+\mathbf{U}_1\!\cdots\! \mathbf{U}_i\mathbf{R}_{(i,j)\rightarrow (i+1,j)}\mathbf{V}_j\!\cdots\! \mathbf{V}_N \\
&\pi_{S_{i,j}}\!(\mathbf{G}_{11}^{cl})\!\!=\!\!\pi_{S_{i,j-1}}\!(\mathbf{G}_{11}^{cl})+\mathbf{U}_1\!\cdots\! \mathbf{U}_i\mathbf{R}_{(i,j)\rightarrow (i,j-1)}\mathbf{V}_j\!\cdots\! \mathbf{V}_N.
\end{align*}
The $H_2$ norm of residual terms will be used later to compute the optimal value of Problem \ref{h2controlproblem}. Finally, all the above operations can be performed at the state space level, as summarized in Lemma \ref{lempij}.
\begin{figure*}[t]
\centering
\vspace{-.10in}
\begin{align}
&\|\mathbf{R}_{(i-1,j)\rightarrow (i,j)}\|^2=
\begin{cases}
tr W^TX_1 W &\mbox{if } i=1, j=1 \\
tr \Phi_{\uparrow j}^{\uparrow j} L_j^TX_1L_j &\mbox{if } i=1, j\geq 2 \\
tr (L_{i-1}V_{\uparrow i-1}-L_jV_{\uparrow j})^TX_1(L_{i-1}V_{\uparrow i-1}-L_jV_{\uparrow j}) &\mbox{if } i\geq 2 
\end{cases} \label{resnorm1} \\
&\|\mathbf{R}_{(i,j+1)\rightarrow (i,j)}\|^2=
\begin{cases}
tr F Y_N F^T &\mbox{if } i=N, j=N \\
tr \Psi_{\downarrow i}^{\downarrow i} K_iY_NK_i^T &\mbox{if } i\leq N-1, j=N \\
tr (H^{\downarrow i}K_i-H^{\downarrow j+1}K_{j+1})Y_j(H^{\downarrow i}K_i-H^{\downarrow j+1}K_{j+1})^T &\mbox{if } j\leq N-1.
\end{cases} \label{resnorm2}
\end{align}
\vspace{-.07in}
\hrule
\end{figure*}

\begin{lemma}
\label{lempij}
The projection of $\mathbf{G}_{11}^{cl}$ onto any subspace $S_{i,j}$ in Fig. \ref{figdiagram} is given by 
$
\pi_{S_{i,j}}(\mathbf{G}_{11}^{cl})=\mathbf{U}_1\mathbf{U}_2\cdots \mathbf{U}_i\mathbf{P}'_{i,j}\mathbf{V}_j\cdots \mathbf{V}_{N-1}\mathbf{V}_N
$
where
\begin{equation}
\label{p_ij}
\mathbf{P}'_{i,j}=\left[ \begin{array}{c|c}
\mathcal{A} & \Lambda_j \\ \hline 
 \Gamma_i & 0
\end{array}\right]
\end{equation}
$$
\Gamma_i \!\!=\!\!
\begin{cases} 
\mathcal{F} &\!\!\mbox{if } i=0 \\
\!-{\Psi_{\downarrow i}^{\downarrow i}}^{\frac{1}{2}}\mathcal{K}_i &\!\!\mbox{if } 1\!\leq\! i \!\leq\! N 
\end{cases}\!\!, 
\Lambda_j\!\!=\!\!
\begin{cases} 
\mathcal{W} &\!\mbox{if } j\!=\!N\!+\!1 \\
\!-\mathcal{L}_j {\Phi_{\uparrow j}^{\uparrow j}}^{\frac{1}{2}} &\!\mbox{if } 1 \!\leq\! j \!\leq\! N 
\end{cases}.
$$
Moreover,
\begin{align*}
\mathbf{R}_{(i-1,j)\rightarrow (i,j)}&=
\left[ \begin{array}{c|c}
A_{i,i-1}^{KL} & -\tilde{J}_i \Lambda_j \\
 \hline 
C_{U_i} & 0
\end{array}\right] \\
\mathbf{R}_{(i,j+1)\rightarrow (i,j)}&=
\left[ \begin{array}{c|c}
A_{j+1,j}^{KL} & B_{V_j} \\ \hline
-\Gamma_i\hat{J}_j & 0
\end{array}\right]. 
\end{align*}
\end{lemma}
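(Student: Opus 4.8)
The plan is to establish both claims by a double induction over the grid of index pairs $(i,j)$ in Fig.~\ref{figdiagram}, moving through the inclusion diagram one projection at a time and invoking the nested-projection identity (Theorem~\ref{theonproj}) to justify each step. The induction is anchored at the corner $(0,N+1)$: since $S_{0,N+1}=H_2$ we have $\pi_{S_{0,N+1}}(\mathbf{G}_{11}^{cl})=\mathbf{G}_{11}^{cl}$, and with no inner or co-inner factors present and the boundary conventions $\Gamma_0=\mathcal{F}$, $\Lambda_{N+1}=\mathcal{W}$, the realization (\ref{p_ij}) reads $\left[\begin{array}{c|c}\mathcal{A}&\mathcal{W}\\\hline\mathcal{F}&0\end{array}\right]$, which is exactly $\mathbf{G}_{11}^{cl}$ by (\ref{cltf}). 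Any node $(i,j)$ of the diagram is reached from this corner along a monotone path that increases $i$ and decreases $j$, and each edge of such a path will be handled by a single-step projection.

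For the inductive step I treat the two edge types separately. A downward edge $S_{i,j}\subset S_{i-1,j}$ peels off the \emph{inner} factor $\mathbf{U}_i$: by Theorem~\ref{theonproj}, $\pi_{S_{i,j}}=\pi_{S_{i,j}}\circ\pi_{S_{i-1,j}}$, so I start from the inductive hypothesis $\pi_{S_{i-1,j}}(\mathbf{G}_{11}^{cl})=\mathbf{U}_1\cdots\mathbf{U}_{i-1}\mathbf{P}'_{i-1,j}\mathbf{V}_j\cdots\mathbf{V}_N$ and project it onto $S_{i,j}$. Writing $S_{i,j}$ in the orthonormal form (\ref{sijuv}) and using that $\mathbf{U}_1\cdots\mathbf{U}_{i-1}$ is inner and $\mathbf{V}_j\cdots\mathbf{V}_N$ co-inner, the projection reduces to the single-step optimality condition $\langle \mathbf{U}_i^*\mathbf{P}'_{i-1,j}-\mathbf{P}'_{i,j},\,\mathbf{M}_i^{-1}\mathbf{Q}\mathbf{N}_j^{-1}\rangle=0$ for all $\mathbf{Q}\in H_2$, whose state-space solution is precisely the content of Lemma~\ref{lemproj}. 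The crucial outcome is that left-multiplication by the anticausal factor $\mathbf{U}_i^*$ followed by re-splitting into $H_2\oplus H_2^\perp$ — a split dictated by the spectral factorization of Lemma~\ref{lemUM}, which consumes the Riccati solution attached to $\mathbf{U}_i$ — preserves the realization on the state $\mathcal{A}$ and alters only the output matrix, producing $\Gamma_i=-{\Psi_{\downarrow i}^{\downarrow i}}^{\frac{1}{2}}\mathcal{K}_i$ with $\mathcal{K}_i$ as in (\ref{calK}). A leftward edge $S_{i,j}\subset S_{i,j+1}$ is entirely dual: peeling the \emph{co-inner} factor $\mathbf{V}_j$ leaves $\mathcal{A}$ and $\Gamma_i$ untouched and updates the input matrix to $\Lambda_j=-\mathcal{L}_j{\Phi_{\uparrow j}^{\uparrow j}}^{\frac{1}{2}}$ with $\mathcal{L}_j$ as in (\ref{calL}). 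Because the orthogonal projection onto a fixed $S_{i,j}$ is unique, the value at each node is independent of the traversal, so the two single-step updates are mutually consistent and the representation (\ref{p_ij}) is well defined throughout the grid.

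The residual realizations drop out of the same single-step computation. By definition, $\mathbf{R}_{(i-1,j)\to(i,j)}=\mathbf{P}'_{i-1,j}-\mathbf{U}_i\mathbf{P}'_{i,j}$, a difference whose naive realization carries the states of $\mathbf{U}_i$ together with those of $\mathbf{P}'_{i,j}$ and $\mathbf{P}'_{i-1,j}$; the point of Lemma~\ref{lemproj} is that the stable/antistable cancellation forced by the factorization collapses this to the reduced state matrix $A_{i,i-1}^{KL}$ — one of the Hurwitz diagonal blocks of (\ref{muAzeta}) guaranteed by Proposition~\ref{proprcsol}. Reading off the surviving data then yields the claimed $\left[\begin{array}{c|c}A_{i,i-1}^{KL}&-\tilde{J}_i\Lambda_j\\\hline C_{U_i}&0\end{array}\right]$, where $C_{U_i}$ is inherited from the realization of $\mathbf{U}_i$ in (\ref{defui}) and the selector $\tilde{J}_i$ from (\ref{calJ}) extracts the correct combination of gains; the dual computation produces $\mathbf{R}_{(i,j+1)\to(i,j)}$ with state $A_{j+1,j}^{KL}$, input $B_{V_j}$, and output $-\Gamma_i\hat{J}_j$.

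I expect the main obstacle to be exactly the single-step state-space reduction quarantined in Lemma~\ref{lemproj}: showing that, after multiplication by the anticausal inner factor and re-projection, (a) the projection $\mathbf{P}'_{i,j}$ stays anchored on the state $\mathcal{A}$ with only its output (resp. input) matrix changing, and (b) the residual's realization collapses from the large combined state to the single diagonal block $A_{i,i-1}^{KL}$ (resp. $A_{j+1,j}^{KL}$). Both hinge on the pole--zero cancellations encoded in the spectral factorizations of Lemma~\ref{lemUM}, each of which is solvable precisely because of the stabilizing solutions to the coupled Riccati equations (\ref{coupledrc}) furnished by Proposition~\ref{proprcsol}; the Hurwitz property of those diagonal blocks is what places every $\mathbf{P}'_{i,j}$ and every residual in $H_2$, so that the Hilbert-space orthogonality underlying the projection is legitimate. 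Once those two appendix lemmas are in hand, Lemma~\ref{lempij} itself is the bookkeeping that threads them along the diagram, and the residual norm expressions (\ref{resnorm1})--(\ref{resnorm2}) follow afterward by computing Gramians of these reduced realizations and identifying them with the Riccati variables $X_1,Y_N,Y_j$.
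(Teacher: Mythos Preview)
Your proposal is correct and follows essentially the same route as the paper: anchor the induction at $(0,N+1)$ where $\mathbf{P}'_{0,N+1}=\mathbf{G}_{11}^{cl}$, traverse the inclusion diagram one edge at a time using the nested-projection identity, and delegate each single-step projection to Lemma~\ref{lemproj} (itself resting on the factorizations of Lemma~\ref{lemUM} and the Riccati solutions of Proposition~\ref{proprcsol}), with the residual realizations obtained afterward from $\mathbf{P}'_{i-1,j}-\mathbf{U}_i\mathbf{P}'_{i,j}$ and its dual by state-space reduction. Your identification of where the real work lies --- the stable/antistable cancellation inside Lemma~\ref{lemproj} --- matches the paper's own emphasis.
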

\begin{proof} See Appendix \ref{proof:lempij}. \end{proof}

\subsection{Proof of Optimality}
We are now ready to prove that  $\pi_{S_{i,i}}(\mathbf{G}_{11}^{cl})=0$ for every $i\in\{1,2,\cdots,N\}$. 
Combined with Proposition \ref{prop1}, this completes the proof of optimality of the proposed controller.

\begin{proof} (of Theorem \ref{maintheo})
We have verified the existence and uniqueness of the stabilizing solution to (\ref{coupledrc}) in Section \ref{seccoupledrc}. By Lemma \ref{lempij}, for every $i\in\{1,2,\cdots,N\}$, we have
$$
\mathbf{P}'_{i,i}
=\left[ \begin{array}{c|c}
\mathcal{A} & -\mathcal{L}_i {\Phi_{\uparrow i}^{\uparrow i}}^{\frac{1}{2}} \\ \hline
-{\Psi_{\downarrow i}^{\downarrow i}}^{\frac{1}{2}}\mathcal{K}_{i} & 0
\end{array}\right].
$$
Apply a state space transformation defined by $\bar{\mu}$ and $\bar{\zeta}$. As we have observed in (\ref{muAzeta}), $\bar{\mu}\mathcal{A}\bar{\zeta}$ is an upper block triangular matrix. 
Also, it is straightforward to check that all $(\cdot)_{\downarrow i+1}$ sub-blocks of $\bar{\mu}\mathcal{L}_i{\Phi_{\uparrow i}^{\uparrow i}}^{\frac{1}{2}}$ are zero. Furthermore, it is possible to show that all $(\cdot)^{\uparrow i}$ sub-blocks of ${\Psi_{\downarrow i}^{\downarrow i}}^{\frac{1}{2}}\mathcal{K}_{i}\bar{\zeta}$ are zero. Hence $\mathbf{P}'_{i,i}=0$.
Therefore, for every $i\in\{1,2,\cdots,N\}$, $\pi_{S_{i,i}}(\mathbf{G}_{11}^{cl})=\mathbf{U}_1\mathbf{U}_2\cdots\mathbf{U}_i\mathbf{P}'_{i,i}\mathbf{V}_1\cdots\mathbf{V}_{N-1}\mathbf{V}_N=0$.
By Proposition \ref{prop1}, this implies that the proposed controller is the optimal solution to Problem \ref{h2controlproblem}.

Since we have shown that $\mathbf{G}_{11}^{cl}$ is the optimal closed loop transfer function, the optimal cost is given by computing its $H_2$ norm. 
To obtain more explicit expression, consider a nested projection $\pi_{S_{1,1}}=\pi_{S_{1,1}}\circ \cdots \circ \pi_{S_{1,N}}\circ \pi_{S_{1,N+1}}$. It is possible to write
\begin{align*}
\mathbf{G}_{11}^{cl}=&\pi_{S_{1,1}}\!(\mathbf{G}_{11}^{cl})\!+\!\mathbf{R}_{(0,N+1)\rightarrow (1,N+1)}\!+\!\mathbf{U}_1\mathbf{R}_{(1,N+1)\rightarrow (1,N)} \\
&+\sum_{j=1}^{N-1}\mathbf{U}_1\mathbf{R}_{(1,j+1)\rightarrow (1,j)}\mathbf{V}_{j+1}\cdots\mathbf{V}_N.
\end{align*}
Since $\pi_{S_{1,1}}(\mathbf{G}_{11}^{cl})=0$ and all residual terms are orthogonal, the optimal cost $J_{opt}=\|\mathbf{G}_{11}^{cl}\|$ can be decomposed as
\begin{align*}
\|\mathbf{G}_{11}^{cl}\|^2=&\|\mathbf{R}_{(0,N+1)\rightarrow (1,N+1)}\|^2+\|\mathbf{R}_{(1,N+1)\rightarrow (1,N)}\|^2 \\
&+\sum_{j=1}^{N-1}\|\mathbf{R}_{(1,j+1)\rightarrow (1,j)}\|^2.
\end{align*}
Each term can be written more explicitly using the fact (\ref{resnorm1}) (\ref{resnorm2}). 
This proves $J_{opt}^2=J_{cnt}^2+J_{dcnt}^2$.
\end{proof}


\section{Conclusion and Future Work}
In this paper, we have presented a state-space realization of the optimal output feedback controller for the $N$-player triangular LQG problem. We have derived a set of algebraic Riccati equations to be solved to construct the optimal controller.
Solvability of Riccati equations, namely non-singularity of the linear system (\ref{lineqsys}), must be verified in the future work.


\bibliographystyle{plain}
\bibliography{Poset}

\begin{thebibliography}{10}

\bibitem{RefWorks:271}
Y.-C. Ho and K.-C. Chu.
\newblock Team decision theory and information structures in optimal control
  problems--part i.
\newblock {\em IEEE Transactions on Automatic Control}, 17(1):15--22, 1972.

\bibitem{RefWorks:273}
A.~Lamperski and J.~C. Doyle.
\newblock Dynamic programming solutions for decentralized state-feedback {LQG}
  problems with communication delays.
\newblock In {\em American Control Conference (ACC), 2012}, pages 6322--6327,
  2012.

\bibitem{RefWorks:267}
L.~Lessard.
\newblock Decentralized {LQG} control of systems with a broadcast architecture.
\newblock In {\em Decision and Control (CDC), 2012 IEEE 51st Annual Conference
  on}, pages 6241--6246, 2012.

\bibitem{RefWorks:294}
L.~Lessard and S.~Lall.
\newblock Optimal control of two-player systems with output feedback, 2013.
\newblock arXiv:1303.3644.

\bibitem{RefWorks:295}
D.~G. Luenberger.
\newblock {\em Optimization by Vector Space Methods}.
\newblock Wiley, 1968.
\newblock 68008716.

\bibitem{RefWorks:307}
D.~G. Luenberger.
\newblock Projection pricing.
\newblock {\em Journal of Optimization Theory and Applications}, 109(1):1--25,
  2001.

\bibitem{RefWorks:17}
M.~Rotkowitz and S.~Lall.
\newblock A characterization of convex problems in decentralized control.
\newblock {\em IEEE Transactions on Automatic Control}, 50(12):1984--1996,
  2005.

\bibitem{RefWorks:293}
M.~Rotkowitz and S.~Lall.
\newblock Convexification of optimal decentralized control without a
  stabilizing controller.
\newblock In {\em Proceedings of the International Symposium on Mathematical
  Theory of Networks and Systems (MTNS)}, pages 1496--1499, 2006.

\bibitem{RefWorks:277}
S.~Sab\u{a}u and N.~C. Martins.
\newblock On the stabilization of {LTI} decentralized configurations under
  quadratically invariant sparsity constraints.
\newblock In {\em 48th Annual Allerton Conference on Communication, Control,
  and Computing}, pages 1004--1010, 2010.

\bibitem{RefWorks:265}
P.~Shah and P.A. Parrilo.
\newblock An optimal controller architecture for poset-causal systems, 2011.
\newblock arXiv:1111.7221.

\bibitem{RefWorks:176}
S.~Skogestad and I.~Postlethwaite.
\newblock {\em Multivariable feedback control: analysis and design}.
\newblock John Wiley, 2005.

\bibitem{RefWorks:269}
J.~Swigart and S.~Lall.
\newblock Optimal controller synthesis for a decentralized two-player system
  with partial output feedback.
\newblock In {\em American Control Conference (ACC), 2011}, pages 317--323,
  2011.

\bibitem{RefWorks:292}
M.~Vidyasagar.
\newblock {\em Control system synthesis: a factorization approach}.
\newblock MIT Press, 1985.
\newblock 84014411.

\bibitem{RefWorks:21}
K.~Zhou, J.~C. Doyle, and K.~Glover.
\newblock {\em Robust and optimal control}.
\newblock Prentice-Hall, Inc, Upper Saddle River, NJ, USA, 1996.

\end{thebibliography}

\appendix

\ifdefined\LONGVERSION
\subsection{Quadratic Invariance and Convexity}
\label{secqi}
This section gives a brief review of the notion of \emph{Quadratic Invariance} and how an optimal control problem can be formulated as an infinite dimensional convex optimization problem. For brevity, our discussion here is rather informal; for a thorough introduction, readers are referred to \cite{RefWorks:17}.
The $H_2$ optimal control formulated in Problem \ref{h2controlproblem} is a nonconvex optimization problem respect to $\mathbf{K}$.
A natural approach is to introduce the \emph{Youla parametrization}, which has been historically used to convexify centralized control problems.
Youla parameterization is particularly simple if $\mathbf{G}$ is stable, and is given by
$\mathbf{Q}=-\mathbf{K}(I-\mathbf{G}_{22}\mathbf{K})^{-1}$.
The inverse in this expression is guaranteed to exist on the domain of stabilizing $\mathbf{K}$ by the generalized Nyquist criterion \cite{RefWorks:176}. 
Conversely, the corresponding controller $\mathbf{K}$ can be recovered from $\mathbf{Q}$ by
$\mathbf{K}=-(I-\mathbf{Q}\mathbf{G}_{22})^{-1}\mathbf{Q}$.
The objective function is expressed as $\|\mathbf{G}_{11}-\mathbf{G}_{12}\mathbf{Q}\mathbf{G}_{21}\|^2$, which is clearly convex with respect to the new parameter $\mathbf{Q}$. 
Moreover, the requirement that $\mathbf{K}$ is stabilizing is translated in the new domain as the requirement that $\mathbf{Q}$ is stable (i.e., $\mathbf{Q}\in H_\infty$)\footnote{This can be intuitively understood via the \emph{internal model principle} \cite{RefWorks:176}.}, which is also a convex constraint.
The information constraint $\mathbf{K}\in\mathcal{I}$, however, results in a nonconvex constraints on the $\mathbf{Q}$ domain, unless $\mathcal{I}$ is \emph{quadratically invariant} under $\mathbf{G}_{22}.$
\begin{definition}(Quadratic Invariance) Let $\mathcal{U},\mathcal{Y}$ be vector spaces. Suppose $\mathbf{G}$ is a linear mapping from $\mathcal{U}$ to $\mathcal{Y}$ and $\mathcal{I}$ is a set of linear maps from $\mathcal{Y}$ to $\mathcal{U}$. Then $\mathcal{I}$ is called quadratically invariant under $\mathbf{G}$ if $\mathbf{K}\in \mathcal{I} \Rightarrow  \mathbf{K}\mathbf{G}\mathbf{K}\in \mathcal{I}$.
\end{definition}
The significance of $\mathcal{I}$ being quadratic invariant is that, the condition $\mathbf{K}\in\mathcal{I}$ can be translated to $\mathbf{Q}\in\mathcal{I}$ on the new domain, which is a convex constraint\footnote{This fact can be understood from the following informal observation: if $\mathbf{K}\in \mathcal{I}$ and $\mathcal{I}$ is QI under $\mathbf{G}_{22}$, the Neumann series expansion gives
$$
\mathbf{Q}=-\mathbf{K}(I-\mathbf{G}_{22}\mathbf{K})^{-1}=-\mathbf{K}-\mathbf{K}\mathbf{G}_{22}\mathbf{K}-\mathbf{K}(\mathbf{G}_{22}\mathbf{K})^2-\cdots.
$$
Since we can inductively show that every term in the last expression belongs to $\mathcal{I}$, $\mathbf{Q}\in \mathcal{I}.$}.

Since we are considering triangular LQG problems in this paper, $\mathbf{G}_{22}$ has a lower block triangular structure ($\mathbf{G}_{22}\in\mathcal{I}_{LBT}$), while the information constraint on the controller implies that the controller transfer function is also lower block triangular ($\mathbf{K}\in\mathcal{I}_{LBT}$). It is easy to verify that the space of transfer function matrices with sparsity pattern $\mathcal{I}_{LBT}$ is indeed quadratically invariant under $\mathbf{G}_{22}$. 
Therefore, Problem \ref{h2controlproblem} can be recast as the following infinite dimensional convex optimization problem.
\begin{subequations}
\label{h2modelmatching1}
\begin{align}
\min \;\; & \|\mathbf{G}_{11}-\mathbf{G}_{12}\mathbf{Q}\mathbf{G}_{21}\| \\
\text{ s.t } \;\; & \mathbf{Q}\in H_\infty \cap \mathcal{I}_{LBT}.
\end{align}
\end{subequations}
An optimization problem of this form is called the \emph{model matching problem}.

If $\mathbf{G}_{22}$ is not stable, Youla parameter must be constructed using coprime factors of $\mathbf{G}_{22}$ \cite{RefWorks:292}.
In this case, parametrization of stabilizing controllers subject to the information constraint is more involved. Nevertheless, if $\mathcal{I}$ is quadratically invariant under $\mathbf{G}_{22}$, the information constraints can be recast as linear constraints on the Youla parameter \cite{RefWorks:277}. Therefore, even in this case, the optimal $H_2$ control problem subject to information constraints can be reformulated as a convex optimization problem.
\fi

\ifdefined\LONGVERSION
\subsection{Centralized $H_2$ Optimal Control}
\label{seccent2}
Assume that $A$ is a Hurwitz matrix and that $\mathbf{G}_{11}\in H_2$. 
In this case, the optimal solution $\mathbf{Q}$ to the $H_2$ model matching problem (\ref{h2modelmatching1}) must be in $H_2$, 
since otherwise $\|\mathbf{G}_{11}-\mathbf{G}_{12}\mathbf{Q} \mathbf{G}_{21}\|$ is unbounded under Assumption \ref{asmp1}. 
Hence the centralized $H_2$ optimal control problem can be cast as a simple $H_2$ model matching problem
\begin{subequations}
\label{h2modelmatching2}
\begin{align}
\min \;\; & \|\mathbf{G}_{11}-\mathbf{G}_{12}\mathbf{Q}\mathbf{G}_{21}\| \\
\text{ s.t } \;\; & \mathbf{Q}\in H_2.
\end{align}
\end{subequations}
One approach to find a solution to the model matching problem is to apply the projection theorem \cite{RefWorks:295}. Let $H$ be a Hilbert space and $S$ be a closed nonempty subspace of $H$. 
If $a\in H$, then there exists an element $x_0\in S$ such that $\|a-x_0\|\leq \|a-x\|$ for all $x\in S$. Such an element $x_0$ is called a projection of $a$ onto $S$, and is denoted by $x_0=\pi_S(a)$.
It can be shown that $x_0=\pi_S(a)$ if and only if $x_0\in S$ and 
$\left<a-x_0,x\right>=0$ for all $x\in S$.

In the model matching problem (\ref{h2modelmatching2}), a set $S\triangleq \{\mathbf{G}_{12}\mathbf{Q}\mathbf{G}_{21}: \mathbf{Q}\in H_2\}$ defines a closed nonempty subspace of $H_2$.
Hence the optimal solution to (\ref{h2modelmatching2}) can be found by computing a projection of $\mathbf{G}_{11}$ onto $S$. 
To this end, a coprime factorization technique of rational functions, particularly the \emph{inner-outer factorization}, is useful. 
We first recall the following facts. Proofs can be found in \cite{RefWorks:21}.
\begin{theorem}
\label{theorceq}
Suppose $(A,B)$ is stabilizable, $H$ has full column rank, and $\left[ \begin{array}{cc} A-j\omega I & B \\ F & H\end{array}\right]$ has full column rank for all $\omega \in \mathbb{R}$. Then algebraic Riccati equation $(X,K)=ARE_p(A,B,F,H)$
has a unique positive semidefinite solution. Moreover, it is stabilizing (that is, $A_{cl}\triangleq A+BK$ is a Hurwitz stable matrix). 
\end{theorem}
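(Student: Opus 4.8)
The plan is to reduce the cross-coupled Riccati equation to a standard (cross-term-free) ARE and then invoke the Hamiltonian-matrix machinery. First I would complete the square: since $H$ has full column rank, $\Psi = H^TH$ is invertible, so setting $K_0 = -\Psi^{-1}H^TF$ and $\tilde A = A + BK_0$ the equation rewrites as
$$\tilde A^T X + X\tilde A - XB\Psi^{-1}B^TX + \tilde Q = 0, \qquad \tilde Q = F^T(I - H\Psi^{-1}H^T)F.$$
Because $I - H\Psi^{-1}H^T$ is an orthogonal projector, $\tilde Q \succeq 0$, and since $\tilde A$ differs from $A$ by the state feedback $BK_0$, stabilizability of $(\tilde A, B)$ is equivalent to that of $(A,B)$.

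Next I would introduce the Hamiltonian matrix
$$\mathcal H = \begin{bmatrix} \tilde A & -B\Psi^{-1}B^T \\ -\tilde Q & -\tilde A^T \end{bmatrix}$$
and establish the crucial spectral fact that $\mathcal H$ has no eigenvalue on the imaginary axis. The hard part will be showing that $j\omega \in \sigma(\mathcal H)$ would force the pencil $\begin{bmatrix} A - j\omega I & B \\ F & H \end{bmatrix}$ to drop column rank, contradicting the hypothesis. Concretely, if $\mathcal H \begin{bmatrix} v \\ w \end{bmatrix} = j\omega \begin{bmatrix} v \\ w \end{bmatrix}$, I would manipulate the two block rows, use the factorization $\tilde Q = F^T(I - H\Psi^{-1}H^T)F$ together with the definition of $\tilde A$ to exhibit a nonzero vector annihilated by the pencil; the projector structure of $I - H\Psi^{-1}H^T$ is exactly what lets one recover the $H$ block. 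This is the single place where all three hypotheses are genuinely used together.

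Given this spectral fact, $\mathcal H$ is Hamiltonian with spectrum symmetric about the imaginary axis and none on it, so exactly $n$ eigenvalues lie in the open left half plane. I would take the $n$-dimensional stable invariant subspace, write a basis matrix as $\begin{bmatrix} X_1 \\ X_2 \end{bmatrix}$ with $n \times n$ blocks, and use stabilizability of $(\tilde A, B)$ to argue that $X_1$ is nonsingular; then $X = X_2 X_1^{-1}$ solves the ARE, and $A_{cl} = \tilde A - B\Psi^{-1}B^TX = A + BK$ inherits the stable spectrum of $\mathcal H$ and is therefore Hurwitz. Symmetry of $X$ follows from the symplectic structure of the invariant subspace, and uniqueness is immediate because the stable invariant subspace of $\mathcal H$ is unique and every stabilizing solution must correspond to it.

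Finally, for positive semidefiniteness I would use a Lyapunov inertia argument: rearranging the ARE gives $A_{cl}^TX + XA_{cl} = -(\tilde Q + XB\Psi^{-1}B^TX) \preceq 0$, and since $A_{cl}$ is Hurwitz this forces $X = \int_0^\infty e^{A_{cl}^T t}(\tilde Q + XB\Psi^{-1}B^TX)\, e^{A_{cl} t}\, dt \succeq 0$. An equivalent and more conceptual route is the optimal-control interpretation, in which $x_0^T X x_0$ equals the optimal value of $\min_u \int_0^\infty |Fx + Hu|^2\, dt$ subject to $\dot x = Ax + Bu$, $x(0) = x_0$, which is manifestly nonnegative; I would prefer the Lyapunov route since it is self-contained once the stabilizing property has been established.
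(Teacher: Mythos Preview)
The paper does not actually prove this theorem: it simply states it as a recalled fact and defers to the reference \cite{RefWorks:21} (``Proofs can be found in \cite{RefWorks:21}''). So there is no paper proof to compare against.

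Your proposal is the standard textbook argument (essentially the Zhou--Doyle--Glover treatment the paper is citing): complete the square to eliminate the cross term $F^TH$, form the associated Hamiltonian matrix, use the full-column-rank hypothesis on the pencil to rule out imaginary-axis eigenvalues of $\mathcal H$, build $X$ from the stable invariant subspace (invoking stabilizability for the invertibility of $X_1$), and read off positive semidefiniteness from the closed-loop Lyapunov identity. All the ingredients are correctly identified and the logical flow is sound. The one place you should be careful when writing it out is the ``hard part'': from $\mathcal H\begin{bmatrix}v\\w\end{bmatrix}=j\omega\begin{bmatrix}v\\w\end{bmatrix}$ you first combine the two block rows with $w^*$ and $v^*$ to force $B^Tw=0$ and $(I-H\Psi^{-1}H^T)Fv=0$, which then reduces the first row to $(A-j\omega I)v + B(K_0v-\Psi^{-1}B^Tw)=0$ and lets you assemble a nonzero kernel vector of the pencil; make sure you handle the case $v=0$ (which, via $B^Tw=0$ and $\tilde A^Tw=-j\omega w$, contradicts stabilizability) separately.
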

\begin{theorem}
\label{theoiof}
\begin{itemize}
\item[(1).] (Right Inner-Outer Factorization) 

Assume $\mathbf{G}_{12}=\left[ \begin{array}{c|c} A&B\\ \hline  F&H\end{array}\right]$ is stabilizable and $\left[ \begin{array}{cc} A-j\omega I & B \\ F & H \end{array}\right]$ has full column rank for all $\omega\in \mathbb{R}$. 
Then there exists a right coprime factorization $\mathbf{G}_{12}=\mathbf{U}\mathbf{M}^{-1}$ such that $\mathbf{U}$ is inner and $\mathbf{M}$ is stably invertible. A particular realization of such factorization is
$$
\mathbf{U}=\left[ \begin{array}{c|c}
A+BK & B\Psi^{-\frac{1}{2}} \\ \hline 
F+HK & H\Psi^{-\frac{1}{2}} \end{array}\right], \;
\mathbf{M}^{-1}=\left[ \begin{array}{c|c}
A&B \\ \hline  
-\Psi^{\frac{1}{2}}K & \Psi^{\frac{1}{2}} \end{array}\right]
$$
where $\Psi=H^TH$ and $(X,K)=ARE_p(A,B,F,H)$.
\item[(2).] (Left Inner-Outer Factorization) 

Assume $\mathbf{G}_{21}=\left[ \begin{array}{c|c}A & W\\ \hline 
C&V\end{array}\right]$ is detectable and $\left[ \begin{array}{cc}A-j\omega I & W\\ 
C&V\end{array}\right]$ has full row rank for all $\omega\in \mathbb{R}$. Then there exists a left coprime factorization $\mathbf{G}_{21}=\mathbf{N}^{-1}\mathbf{V}$ such that $\mathbf{V}$ is co-inner and $\mathbf{N}$ is stably invertible. 
A particular realization of such factorization is 
$$
\mathbf{V}=\left[ \begin{array}{c|c} 
A+LC & W+LV\\ \hline 
\Phi^{-\frac{1}{2}}C &\Phi^{-\frac{1}{2}}V  \end{array}\right], \;
\mathbf{N}^{-1}=\left[ \begin{array}{c|c} 
A&-L\Phi^{\frac{1}{2}}\\ \hline 
C&\Phi^{\frac{1}{2}}\end{array}\right] 
$$
where $\Phi=VV^T$ and $(Y,L)=ARE_d(A,C,W,V)$.
\end{itemize}
\end{theorem}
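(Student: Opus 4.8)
The plan is to prove part~(1), the right inner--outer factorization, directly at the state-space level, and then deduce part~(2) by duality: the dual data $ARE_d(A,C,W,V)$ is precisely $ARE_p$ applied to the transposed system $(A^T,C^T,W^T,V^T)$, so transposing the part~(1) factorization of $\mathbf{G}_{21}^T$ yields the left factorization claimed in part~(2).

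First I would verify the factorization identity in the equivalent form $\mathbf{U}=\mathbf{G}_{12}\mathbf{M}$. Writing the cascade realization of $\mathbf{G}_{12}\mathbf{M}$ on the stacked state and applying the similarity transformation $T=\left[\begin{smallmatrix}I&I\\0&I\end{smallmatrix}\right]$ block-diagonalizes the dynamics into the two blocks $A$ and $A+BK$; the $A$-block then carries zero input matrix, hence is uncontrollable and may be deleted, leaving exactly the claimed realization $[A+BK,\,B\Psi^{-1/2};\,F+HK,\,H\Psi^{-1/2}]$ of $\mathbf{U}$. This step is routine state-space bookkeeping.

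The core step is showing that $\mathbf{U}$ is inner, $\mathbf{U}^*\mathbf{U}=I$. I would invoke the standard criterion that a stable realization $[A_c,B_U;C_c,D_U]$ with $A_c$ Hurwitz is inner whenever there is a symmetric $X$ with (i) $A_c^TX+XA_c+C_c^TC_c=0$, (ii) $B_U^TX+D_U^TC_c=0$, and (iii) $D_U^TD_U=I$; here $A_c=A+BK$, $C_c=F+HK$, $B_U=B\Psi^{-1/2}$, $D_U=H\Psi^{-1/2}$, and the candidate certificate is exactly the Riccati solution $X$. Condition~(iii) is immediate from $\Psi=H^TH$. Condition~(ii) reduces, after substituting $K=-\Psi^{-1}(XB+F^TH)^T$, to $B^TX+H^TF+\Psi K=0$, which is just the definition of $K$. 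Condition~(i) follows by expanding $A_c^TX+XA_c+C_c^TC_c$ and inserting the rewritten Riccati identity $A^TX+XA+F^TF=K^T\Psi K$; using $XB+F^TH=-K^T\Psi$, the remaining cross terms cancel in pairs and the expression collapses to zero. That the very same $X$ certifies inner-ness is the heart of the matter, and condition~(i) is where the calculation must be done most carefully.

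It then remains to check that $\mathbf{M}$ is stably invertible and that the factorization is right coprime. Stable invertibility is immediate under the standing assumption that $A$ is Hurwitz: the poles of $\mathbf{M}$ sit at the eigenvalues of $A+BK$, Hurwitz because $(X,K)$ is the stabilizing solution supplied by Theorem~\ref{theorceq}, while the poles of $\mathbf{M}^{-1}$ sit at the eigenvalues of $A$; hence both factors lie in $H_\infty$. Right coprimeness follows from standard coprime-factorization theory, by exhibiting Bezout factors assembled from the same stabilizing gain $K$. I expect the inner-ness verification of the third step to be the main obstacle, since it is the only place where the specific Riccati solution must be shown to play the role of the inner-function Gramian; everything else is either routine realization algebra or a direct appeal to Theorem~\ref{theorceq} and duality.
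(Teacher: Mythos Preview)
Your argument is correct and is essentially the standard textbook derivation. Note, however, that the paper does not supply its own proof of Theorem~\ref{theoiof}: it states the result as background material and refers the reader to \cite{RefWorks:21} (``Proofs can be found in~\cite{RefWorks:21}''). So there is no paper-side proof to compare against beyond the cited reference, and your write-up is precisely the kind of verification that reference would contain. In particular, the inner criterion you invoke---a stable realization $[A_c,B_U;C_c,D_U]$ is inner if some symmetric $X$ satisfies $A_c^TX+XA_c+C_c^TC_c=0$, $B_U^TX+D_U^TC_c=0$, and $D_U^TD_U=I$---is exactly Corollary~13.30 of \cite{RefWorks:21}, which the paper itself cites later in the proof of Lemma~\ref{lemUM} when establishing that the $\mathbf{U}_i$ are inner. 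Your duality reduction for part~(2) is also standard and correct.

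One small remark: the stable invertibility of $\mathbf{M}$ as you phrase it (both $\mathbf{M}$ and $\mathbf{M}^{-1}$ in $H_\infty$) does rely on $A$ being Hurwitz, which is not part of the hypotheses of Theorem~\ref{theoiof} as stated but is the standing assumption of the appendix section in which the theorem appears (``Assume that $A$ is a Hurwitz matrix\ldots''). You flag this correctly; just be aware that without that ambient assumption the claim would need a slightly different formulation.
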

Using $\mathbf{U}$ and $\mathbf{V}$, the subspace $S$ can be expresses as $
S=\left\{\mathbf{U}\mathbf{P}\mathbf{V}: \mathbf{P}\in H_2\right\}$, where $\mathbf{P}=\mathbf{M}^{-1}\mathbf{Q}\mathbf{N}^{-1}$ is a new parameter. Since $\mathbf{M}$ and $\mathbf{N}$ are stably invertible, requiring 
that $\mathbf{Q}\in H_2$ is equivalent to requiring $\mathbf{P}\in H_2$. 

Suppose the projection of $\mathbf{G}_{11}$ onto $S$ can be written as $\mathbf{U}\mathbf{P}'\mathbf{V}$. 
By the optimality condition $\left<\mathbf{G}_{11}-\mathbf{U}\mathbf{P}'\mathbf{V}, \mathbf{U}\mathbf{P}\mathbf{V}\right>=0$ $\forall \mathbf{P}\in H_2$, one obtains $\mathbf{U}^*\mathbf{G}_{11}\mathbf{V}^*-\mathbf{P}'\in H_2^\perp$. Hence it must be that
$\mathbf{P}'=\pi_{H_2}(\mathbf{U}^*\mathbf{G}_{11}\mathbf{V}^*)$. Moreover, straightforward state space manipulations show that
\begin{equation}
\label{p_cf}
\mathbf{P}'=\pi_{H_2}(\mathbf{U}^*\mathbf{G}_{11}\mathbf{V}^*)=
\left[ \begin{array}{c|c} 
A&-L\Phi^{\frac{1}{2}} \\ \hline 
-\Psi^{\frac{1}{2}}K& 0 
\end{array}\right].
\end{equation}
From this result, one can recover the optimal centralized $H_2$ controller.
$$
\mathbf{K}_{opt}=\left[ \begin{array}{c|c}
A+BK+LC & -L \\ \hline 
K & 0 \end{array}\right].
$$
It is also possible to show that the optimal control performance is
\begin{equation}
\label{jcent}
J_{cent}=tr(W^TXW)+tr(\Psi KYK^T).
\end{equation}
\fi

\subsection{Proof of Proposition \ref{propstabsol}} 
By Theorem \ref{theorceq}, $(\hat{X}_1,\hat{K}_1)=ARE_p(A,B,F,H)$ has a unique positive semidefinite solution that is also stabilizing; in particular $A_{\downarrow 1}^{\downarrow 1}+B_{\downarrow 1}^{\downarrow 1}\hat{K}_1$ is stable. 
Thus it suffices to show that if $A_{\downarrow i}^{\downarrow i}+B_{\downarrow i}^{\downarrow i}\hat{K}_i$ is stable for some $i\in\{1,2,\cdots,N-1\}$, then the algebraic Riccati equation
\begin{equation}
\label{rciplus1}
(\hat{X}_{i+1},\hat{K}_{i+1})=ARE_p(A_{\downarrow i+1}^{\downarrow i+1},B_{\downarrow i+1}^{\downarrow i+1},-H^{\downarrow i}\hat{K}_i^b,H^{\downarrow i+1})
\end{equation}
has a unique positive semidefinite solution that is also stabilizing. 
Since $(A_{\downarrow i+1}^{\downarrow i+1},B_{\downarrow i+1}^{\downarrow i+1})$ is stabilizable by Assumption \ref{asmp1}, by Theorem \ref{theorceq}, the only way that (\ref{rciplus1}) fails to have a unique positive semidefinite solution is that 
$
\left[\! \begin{array}{cc}
A_{\downarrow i+1}^{\downarrow i+1}-j\omega I & B_{\downarrow i+1}^{\downarrow i+1} \\
-H^{\downarrow i}\hat{K}_i^b & H^{\downarrow i+1}
\end{array}\!\right]\!
\left[\! \begin{array}{c} z_1 \\ z_2 \end{array}\!\right]\!=\!0
\text{ for some }
\left[\! \begin{array}{c} z_1 \\ z_2 \end{array}\!\right]\!\neq\! 0.
$
Notice that $z_1$ cannot be zero as this implies $z_2$ is also zero due to the assumption that $H^{\downarrow i+1}$ has full column rank. 
Introducing
$
\tilde{z}_1=\left[ \begin{array}{c} 0 \\ z_1 \end{array}\right], \;
\tilde{z}_2=\left[ \begin{array}{c} 0 \\ z_2 \end{array}\right],
$
we have that
$
\left[ \begin{array}{cc}
A_{\downarrow i}^{\downarrow i}-j\omega I & B_{\downarrow i}^{\downarrow i} \\
-H^{\downarrow i}\hat{K}_i & H^{\downarrow i}
\end{array}\right]
\left[ \begin{array}{c} \tilde{z}_1 \\ \tilde{z}_2 \end{array}\right]=0.
$
Left-multiplied by $\left[ \begin{array}{cc} I & -B_{\downarrow i}^{\downarrow i}(\Psi_{\downarrow i}^{\downarrow i})^{-1} {H^{\downarrow i}}^T  \end{array}\right]$, this reduces to
$
j\omega \tilde{z}_1=(A_{\downarrow i}^{\downarrow i}+B_{\downarrow i}^{\downarrow i}\hat{K}_i)\tilde{z}_1, \;\; \tilde{z}_1\neq 0.
$
However, this contradicts that stability of $A_{\downarrow i}^{\downarrow i}+B_{\downarrow i}^{\downarrow i}\hat{K}_i$. This shows that (\ref{rcicsub}) has a unique positive semidefinite and stabilizing solution for all $i\in \{2,\cdots,N\}$.
A similar argument can be applied to (\ref{rcifsub}) as well.

\subsection{Proof of Lemma \ref{lempij}} \label{proof:lempij}
\begin{lemma}\label{lemUM}
(1). For every $i\in\{1,2,\cdots,N\}$, $\mathbf{U}_i$ is an inner function and $\mathbf{G}_{12}^{cl}E^{\downarrow i}=\mathbf{U}_1\mathbf{U}_2\cdots\mathbf{U}_i\mathbf{M}_i^{-1}$. 

(2). For every $j\in\{1,2,\cdots,N\}$, $\mathbf{V}_j$ is a co-inner function and $E_{\uparrow j}\mathbf{G}_{21}^{cl}=\mathbf{N}_j^{-1}\mathbf{V}_j\mathbf{V}_{j+1}\cdots\mathbf{V}_N$.
\end{lemma}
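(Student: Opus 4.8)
(of Lemma \ref{lemUM})
The plan is to prove part (1) in full and to obtain part (2) by the control/estimation duality stressed throughout the paper. Two things must be shown for (1): that each $\mathbf{U}_i$ is inner, and that $\mathbf{G}_{12}^{cl}E^{\downarrow i}=\mathbf{U}_1\cdots\mathbf{U}_i\mathbf{M}_i^{-1}$.

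For inner-ness I would use the standard state-space test: a stable function with realization $(A_U,B_U,C_U,D_U)$ is inner iff there is a symmetric $X$ with $A_U^TX+XA_U+C_U^TC_U=0$, $B_U^TX+D_U^TC_U=0$ and $D_U^TD_U=I$. The candidate for $\mathbf{U}_i$ is $X=X_i$, the stabilizing positive semidefinite solution of the Riccati equation (\ref{rcic}) guaranteed by Proposition \ref{proprcsol}; that same proposition gives stability of $A_{i,i-1}^{KL}$, so $\mathbf{U}_i\in H_\infty$. The first condition is precisely the Lyapunov identity (\ref{rcicr1}) once one checks the structural equality $C_{U_i}^TC_{U_i}=\Sigma_i$; the second follows from (\ref{rcicr2}) after substituting $B_{U_i}=B^{\downarrow i}{\Psi_{\downarrow i}^{\downarrow i}}^{-1/2}$ and $D_{U_i}$ and simplifying; and $D_{U_i}^TD_{U_i}=I$ is immediate from $\Psi=H^TH$. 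For $i=1$ all of this collapses to the verification already recorded in Theorem \ref{theoiof}(1) with $(X_1,K_1)=ARE_p(A,B,F,H)$.

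The factorization I would prove by induction on $i$. Because $\mathbf{M}_i^{-1}$ and $\mathbf{G}_{12}^{cl}$ share the state matrix $\mathcal{A}$ and the input matrix $\mathcal{B}^{\downarrow i}$, the base case $i=1$ (where $E^{\downarrow 1}=I$) reduces to verifying $\mathbf{U}_1\mathbf{M}_1^{-1}=\mathbf{G}_{12}^{cl}$ at the state-space level; forming the cascade produces an $(n+n(N+1))$-dimensional realization whose extra $n$ states must cancel. I would expose this cancellation through the coordinate change $\bar\mu,\bar\zeta$ of (\ref{muAzeta}), in which $\mathcal{A}$ is block upper triangular with diagonal blocks $A^{KL}_{i,i-1}$: the state matrix $A^{KL}_{1,0}=A+BK_1$ of $\mathbf{U}_1$ then aligns with the leading diagonal block, and solving the attendant Sylvester equation (relating $A^{KL}_{1,0}$ and $\mathcal{A}$) renders the $n$ surplus modes uncontrollable, since $\mathcal{B}$ drives only the plant component, so they drop out. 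The inductive step is the one-step identity $\mathbf{M}_i^{-1}\left[\begin{smallmatrix}0\\ I\end{smallmatrix}\right]=\mathbf{U}_{i+1}\mathbf{M}_{i+1}^{-1}$, where the column selection retains the inputs indexed $i+1,\dots,N$; it is established by the same block triangularization, with the Riccati equation (\ref{rcic}) at index $i+1$ supplying the cancellation and the incidence matrix $\mathcal{K}_{i+1}$ bookkeeping the off-diagonal cross terms. Composing this with the inductive hypothesis and $E^{\downarrow i+1}=E^{\downarrow i}\left[\begin{smallmatrix}0\\ I\end{smallmatrix}\right]$ yields the claim.

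Part (2) is the transpose dual of part (1), proven by downward induction on $j$: running the two arguments on $\mathbf{G}_{21}^{cl}$ with $ARE_d$ replacing $ARE_p$, the estimation equations (\ref{rcifr1})--(\ref{rcifr2}) replacing (\ref{rcicr1})--(\ref{rcicr2}), and $Y_j,L_j,\mathcal{L}_j$ replacing $X_i,K_i,\mathcal{K}_i$, shows that each $\mathbf{V}_j$ is co-inner and $E_{\uparrow j}\mathbf{G}_{21}^{cl}=\mathbf{N}_j^{-1}\mathbf{V}_j\cdots\mathbf{V}_N$. The main obstacle throughout is the state-space bookkeeping in the factorization step, namely pinning down the Sylvester/similarity transformation that collapses the cascade and checking that the cross terms carried by $\mathcal{K}_i$ (resp.\ $\mathcal{L}_j$) cancel against the off-diagonal blocks of $\bar\mu\mathcal{A}\bar\zeta$; the inner-ness part, by comparison, is routine Riccati algebra.
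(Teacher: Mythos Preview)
Your proposal is essentially correct and follows the paper's approach: the inner-ness test via the Lyapunov/algebraic conditions with certificate $X_i$ (resp.\ $Y_j$) is exactly what the paper does, and the factorization is likewise proved by induction on $i$ via the one-step identity $\mathbf{U}_i\mathbf{M}_i^{-1}=\mathbf{M}_{i-1}^{-1}E_{\downarrow i-1}E^{\downarrow i}$, which is your $\mathbf{M}_i^{-1}\left[\begin{smallmatrix}0\\ I\end{smallmatrix}\right]=\mathbf{U}_{i+1}\mathbf{M}_{i+1}^{-1}$ shifted one index.

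The only substantive difference is the similarity transformation used to collapse the cascade. The paper does \emph{not} pass through $\bar\mu,\bar\zeta$; instead it applies the single explicit shift $\left[\begin{smallmatrix} I & \tilde{J}_i \\ 0 & I\end{smallmatrix}\right]$ with $\tilde{J}_i$ from (\ref{calJ}), which simultaneously zeroes the top block of the input matrix and the $(1,2)$ block of the state matrix (the latter is where the Riccati relation enters), rendering the surplus $n$ states uncontrollable in one stroke. Your route---triangularize $\mathcal{A}$ via $\bar\mu,\bar\zeta$ and then solve a Sylvester equation in $A_{1,0}^{KL}$ and $\mathcal{A}$---runs into the difficulty that $A_{1,0}^{KL}$ is itself a diagonal block of $\bar\mu\mathcal{A}\bar\zeta$, so the Sylvester operator is singular; a solution exists only because the right-hand side has special structure, and exhibiting one amounts to writing down $\tilde{J}_i$. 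So your plan works, but the ``Sylvester'' step is not generic and effectively rediscovers the paper's explicit transformation.
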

\begin{proof}
(1). When $i=1$,
\begin{align*}
\mathbf{U}_1\mathbf{M}_1^{-1}&=\left[ \begin{array}{cc|c}
A_{1,0}^{KL} & -B\mathcal{K}_1 & B \\
0 & \mathcal{A} & \mathcal{B}  \\ \hline
F+HK_1 & -H\mathcal{K}_1 & H
\end{array}\right] \\
&=\left[ \begin{array}{cc|c}
A_{1,0}^{KL} & 0 & 0\\
0 & \mathcal{A} & \mathcal{B} \\ \hline
F+HK_1 & \mathcal{F} & H
\end{array}\right] 
=\mathbf{G}_{12}^{cl}.
\end{align*}
From the first to the second expression, a similarity transformation was applied by multiplying the state space ``A" matrix by
$\left[ \begin{array}{cc} I & \tilde{J}_1 \\ 0 & I \end{array}\right]$ from the left and by its inverse from the right. The fact that $K_1$ is a solution to (\ref{rc1c}) is exploited to obtain ``0'' at the upper-right corner of the state space ``A'' matrix. The last step was obtained by eliminating uncontrollable states.
When $i\in\{2,3,\cdots,N\}$,
\begin{align*}
\mathbf{U}_i\mathbf{M}_i^{-1}&\!\!\!=\!\!\!\left[\! \begin{array}{cc|c}
A_{i,i-1}^{KL} \!\!&\!\! -B^{\downarrow i}\mathcal{K}_i \!\!&\!\! B^{\downarrow i} \\
0 \!\!&\!\! \mathcal{A} \!\!&\!\! \mathcal{B}^{\downarrow i}  \\ \hline
C_{U_i} \!\!\!&\!\!\! -{\Psi_{\downarrow i-1}^{\downarrow i-1}}^\frac{1}{2}E_{\downarrow i-1}E^{\downarrow i}\mathcal{K}_i \!\!&\!\!  {\Psi_{\downarrow i-1}^{\downarrow i-1}}^\frac{1}{2}E_{\downarrow i-1}E^{\downarrow i}\!\!
\end{array}\!\right] \\
&\!\!=\!\!\!\left[ \begin{array}{cc|c}
A_{i,i-1}^{KL} & 0 & 0\\
0 & \mathcal{A} & \mathcal{B}^{\downarrow i}  \\ \hline
C_{U_i} & -{\Psi_{\downarrow i-1}^{\downarrow i-1}}^\frac{1}{2}\mathcal{K}_{i-1} & {\Psi_{\downarrow i-1}^{\downarrow i-1}}^\frac{1}{2}E_{\downarrow i-1}E^{\downarrow i}
\end{array}\right] \\
&\!\!=\! \mathbf{M}_{i-1}^{-1}E_{\downarrow i-1}E^{\downarrow i}.
\end{align*}
From the first to the second expression, a similarity transformation was applied by multiplying by
$\left[ \begin{array}{cc} I & \tilde{J}_i \\ 0 & I \end{array}\right]$ from the left and by its inverse from the right. Combining the above results, one obtains $\mathbf{G}_{12}^{cl}E^{\downarrow i}=\mathbf{U}_1\mathbf{U}_2\cdots\mathbf{U}_i\mathbf{M}_i^{-1}$.
To see that $\mathbf{U}_i$ is inner, notice that
\begin{align*}
{A_{i,i-1}^{KL}}^TX_i+X_iA_{i,i-1}^{KL}+C_{U_i}^TC_{U_i}&=0 \\
D_{U_i}^TC_{U_i}+B_{U_i}^TX_i&=0
\end{align*}
follows from (\ref{rcicr}). Since $D_{U_i}^TD_{U_i}=I$, by Corollary 13.30 in \cite{RefWorks:21}, $\mathbf{U}_i$ is an inner function.

(2). When $j=N$,
\begin{align*}
\mathbf{N}_N^{-1}\mathbf{V}_N&=\left[ \begin{array}{cc|c}
\mathcal{A} & -\mathcal{L}_NC & -\mathcal{L}_NV \\
0 & A_{N+1,N}^{KL} & W+L_NV  \\ \hline
\mathcal{C} & C & V
\end{array}\right] \\ 
&=\left[ \begin{array}{cc|c}
\mathcal{A} & 0 & \mathcal{W} \\
0 & A_{N+1,N}^{KL} & W+L_NV \\ \hline
\mathcal{C} & 0 & V
\end{array}\right] 
=\mathbf{G}_{21}^{cl}.
\end{align*}
From the first to the second expression, a similarity transformation was applied by multiplying the state space ``A" matrix by
$\left[ \begin{array}{cc} I & \hat{J}_N \\ 0 & I \end{array}\right]$ from the left and by its inverse from the right. The last step was obtained by eliminating unobservable states.
When $j\in\{1,2,\cdots,N-1\}$,
\begin{align*}
\mathbf{N}_j^{-1}\mathbf{V}_j&=\left[ \begin{array}{cc|c}
\mathcal{A} & -\mathcal{L}_jC_{\uparrow j} & -\mathcal{L}_jE_{\uparrow j}E^{\uparrow j+1}{\Phi_{\uparrow j+1}^{\uparrow j+1}}^{\frac{1}{2}} \\
0 & A_{j+1,j}^{KL} & B_{V_j}  \\ \hline
\mathcal{C}_{\uparrow j} & C_{\uparrow j} &  E_{\uparrow j}E^{\uparrow j+1}{\Phi_{\uparrow j+1}^{\uparrow j+1}}^\frac{1}{2}
\end{array}\right] \\
&=\left[ \begin{array}{cc|c}
\mathcal{A} & 0 & -\mathcal{L}_{j+1}{\Phi_{\uparrow j+1}^{\uparrow j+1}}^{\frac{1}{2}} \\
0 & A_{j+1,j}^{KL} & B_{V_j}  \\ \hline
\mathcal{C}_{\uparrow j} & 0 &  E_{\uparrow j}E^{\uparrow j+1}{\Phi_{\uparrow j+1}^{\uparrow j+1}}^\frac{1}{2}
\end{array}\right]\\
&=E_{\uparrow j}E^{\uparrow j+1}\mathbf{N}_{i+1}^{-1}.
\end{align*}
From the first to the second expression, a similarity transformation was applied by multiplying by
$\left[ \begin{array}{cc} I & \hat{J}_j \\ 0 & I \end{array}\right]$ from the left and by its inverse from the right. 
Combining the above results, one obtains $E_{\uparrow j}\mathbf{G}_{21}^{cl}=\mathbf{N}_j^{-1}\mathbf{V}_j\mathbf{V}_{j+1}\cdots\mathbf{V}_N$.
To see that $\mathbf{V}_j$ is co-inner, or equivalently that $\mathbf{V}_j^T$ is inner, notice that 
\begin{align*}
A_{j+1,j}^{KL}Y_j+Y_j{A_{j+1,j}^{KL}}^T+B_{V_j}B_{V_j}^T&=0 \\
D_{V_j}B_{V_j}^T+C_{V_j}Y_j&=0
\end{align*}
follows from (\ref{rcifr}). Since $D_{V_j}D_{V_j}^T=I$, $\mathbf{V}_j$ is a co-inner function.
\end{proof}
\begin{lemma}
\label{lemproj}
Let $\Gamma_i$ and $\Lambda_j$ be given as in Lemma \ref{lempij}, while $\Gamma$ and $\Lambda$ be any matrices. Let $0\leq i < j\leq N+1$.
\begin{itemize}
\item[(1).] If $\mathbf{F}=\mathbf{U}_1\cdots \mathbf{U}_i\tilde{\mathbf{P}}_i \mathbf{V}_j\cdots \mathbf{V}_N \in S_{i,j}$ where
$$
\tilde{\mathbf{P}}_i \triangleq \left[ \begin{array}{c|c}
\mathcal{A} & \Lambda \\ \hline 
 \Gamma_i & 0
\end{array}\right], 
$$
then $\pi_{S_{i+1,j}}(\mathbf{F})=\mathbf{U}_1\cdots \mathbf{U}_{i+1}\tilde{\mathbf{P}}_{i+1} \mathbf{V}_j\cdots \mathbf{V}_N$.
\item[(2).] If $\mathbf{F}=\mathbf{U}_1\cdots \mathbf{U}_i\hat{\mathbf{P}}_j \mathbf{V}_j\cdots \mathbf{V}_N \in S_{i,j}$ where
$$
\hat{\mathbf{P}}_j \triangleq \left[ \begin{array}{c|c}
\mathcal{A} & \Lambda_j \\ \hline 
 \Gamma & 0
\end{array}\right],
$$
then $\pi_{S_{i,j-1}}(\mathbf{F})=\mathbf{U}_1\cdots \mathbf{U}_{i}\hat{\mathbf{P}}_{i-1} \mathbf{V}_{j-1}\cdots \mathbf{V}_N$.
\end{itemize}
\end{lemma}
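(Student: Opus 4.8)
The plan is to combine the projection theorem with the inner/co-inner factorizations of Lemma \ref{lemUM}, reducing each stepwise projection to an ordinary $H_2$-projection of a cascade, which is then evaluated at the state-space level exactly as in the centralized computation leading to (\ref{p_cf}). The two parts are mirror images, so I would prove Part (1) in full and obtain Part (2) by the co-inner dual.

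For Part (1), I would start from the representation (\ref{sijuv}) of $S_{i+1,j}$. Since $\mathbf{M}_{i+1}$ and $\mathbf{N}_j$ are stably invertible, the map $\mathbf{Q}\mapsto \mathbf{M}_{i+1}^{-1}\mathbf{Q}\mathbf{N}_j^{-1}$ is a bijection of $H_2$ onto itself, so $S_{i+1,j}=\{\mathbf{U}_1\cdots\mathbf{U}_{i+1}\mathbf{P}\mathbf{V}_j\cdots\mathbf{V}_N:\mathbf{P}\in H_2\}$. Writing the sought projection as $\pi_{S_{i+1,j}}(\mathbf{F})=\mathbf{U}_1\cdots\mathbf{U}_{i+1}\mathbf{P}'\mathbf{V}_j\cdots\mathbf{V}_N$ and testing orthogonality against an arbitrary element of $S_{i+1,j}$, I would peel the inner factors $\mathbf{U}_1,\dots,\mathbf{U}_{i+1}$ off on the left (using $\mathbf{U}_k^*\mathbf{U}_k=I$) and the co-inner factors $\mathbf{V}_j,\dots,\mathbf{V}_N$ off on the right (using $\mathbf{V}_k\mathbf{V}_k^*=I$), both guaranteed by Lemma \ref{lemUM}. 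This collapses the optimality condition stated before the lemma to $\langle \mathbf{U}_{i+1}^*\tilde{\mathbf{P}}_i-\mathbf{P}',\mathbf{P}''\rangle=0$ for all $\mathbf{P}''\in H_2$, that is $\mathbf{P}'=\pi_{H_2}(\mathbf{U}_{i+1}^*\tilde{\mathbf{P}}_i)$. It then remains to evaluate this single $H_2$-projection in state space and show that it equals $\tilde{\mathbf{P}}_{i+1}$.

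For the state-space evaluation I would cascade the realizations: with $\mathbf{U}_{i+1}^*=\left[\begin{smallmatrix}-{A_{i+1,i}^{KL}}^T & -C_{U_{i+1}}^T\\ B_{U_{i+1}}^T & D_{U_{i+1}}^T\end{smallmatrix}\right]$ and $\tilde{\mathbf{P}}_i=\left[\begin{smallmatrix}\mathcal{A}&\Lambda\\ \Gamma_i&0\end{smallmatrix}\right]$, the product has a block-upper-triangular state matrix pairing the anti-stable block $-{A_{i+1,i}^{KL}}^T$ with the stable block $\mathcal{A}$. I would apply a similarity transformation $\left[\begin{smallmatrix}I&Z\\0&I\end{smallmatrix}\right]$ whose off-diagonal $Z$ solves the Sylvester equation ${A_{i+1,i}^{KL}}^TZ+Z\mathcal{A}=-C_{U_{i+1}}^T\Gamma_i$, which decouples the two modes; discarding the resulting uncontrollable anti-stable block yields $\pi_{H_2}(\mathbf{U}_{i+1}^*\tilde{\mathbf{P}}_i)=\left[\begin{smallmatrix}\mathcal{A}&\Lambda\\ B_{U_{i+1}}^TZ+D_{U_{i+1}}^T\Gamma_i&0\end{smallmatrix}\right]$. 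Here I would use the inner identities ${A_{i+1,i}^{KL}}^TX_{i+1}+X_{i+1}A_{i+1,i}^{KL}+C_{U_{i+1}}^TC_{U_{i+1}}=0$ and $D_{U_{i+1}}^TC_{U_{i+1}}+B_{U_{i+1}}^TX_{i+1}=0$ from Lemma \ref{lemUM}, together with the block-triangular structure (\ref{muAzeta}) relating $\mathcal{A}$ to its diagonal blocks $A_{k,k-1}^{KL}$, to solve for $Z$ blockwise in terms of $X_{i+1}$ and the coupled Riccati data.

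The crux, and the step I expect to be the main obstacle, is verifying that the residual output matrix $B_{U_{i+1}}^TZ+D_{U_{i+1}}^T\Gamma_i$ is exactly $\Gamma_{i+1}=-{\Psi_{\downarrow i+1}^{\downarrow i+1}}^{1/2}\mathcal{K}_{i+1}$. This forces one to unpack the block definition (\ref{calK}) of $\mathcal{K}_i$ and $\mathcal{K}_{i+1}$, substitute the explicit $B_{U_{i+1}},C_{U_{i+1}},D_{U_{i+1}}$ from (\ref{defui}), and invoke the algebraic relations (\ref{rcicr}) that couple $K_i$ and $K_{i+1}$ through $X_{i+1}$; the bookkeeping across the $N{+}1$ block rows and columns is where the genuine work lies. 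Finally, Part (2) is the exact co-inner dual: one peels the surrounding inner/co-inner factors in the same way to reduce to $\hat{\mathbf{P}}_{j-1}=\pi_{H_2}(\hat{\mathbf{P}}_j\mathbf{V}_j^*)$, and repeats the cascade-and-decouple computation on the right using the co-inner identities $A_{j+1,j}^{KL}Y_j+Y_j{A_{j+1,j}^{KL}}^T+B_{V_j}B_{V_j}^T=0$ and $D_{V_j}B_{V_j}^T+C_{V_j}Y_j=0$ of Lemma \ref{lemUM} and the Riccati relations (\ref{rcifr}), producing the claimed realization with $\Lambda_{j-1}$ in place of $\Lambda_j$.
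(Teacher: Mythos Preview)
Your approach is essentially the paper's: peel the inner/co-inner factors to reduce to $\tilde{\mathbf{P}}_{i+1}=\pi_{H_2}(\mathbf{U}_{i+1}^*\tilde{\mathbf{P}}_i)$ (resp.\ $\hat{\mathbf{P}}_{j-1}=\pi_{H_2}(\hat{\mathbf{P}}_j\mathbf{V}_{j-1}^*)$), cascade in state space, and decouple the stable/anti-stable blocks by a triangular similarity. The only difference is that what you flag as the ``crux'' and ``main obstacle'' disappears in the paper: rather than solving the Sylvester equation for $Z$ and then verifying the output row blockwise, the paper simply takes $Z=-X_{i+1}\tilde{J}_{i+1}$ (and $Z=-\hat{J}_{j-1}Y_{j-1}$ for part (2)), after which the vanishing of the off-diagonal block \emph{is} the Riccati identity (\ref{rcicr}) (resp.\ (\ref{rcifr})) and the new output row comes out as $\Gamma_{i+1}$ (resp.\ input column $\Lambda_{j-1}$) directly, with no block-by-block bookkeeping needed.
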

\begin{proof}
(1). We show that if $\tilde{\mathbf{P}}_i$ is in the assumed form, then so is $\tilde{\mathbf{P}}_{i+1}$. By the optimality condition, $\pi_{S_{i+1,j}}(\mathbf{F})$ satisfies
$$
\left<\mathbf{F}\!-\!\pi_{S_{i+1,j}}\!(\mathbf{F}), \mathbf{U}_1\!\cdots\!\mathbf{U}_{i+1}\mathbf{M}^{-1}_{i+1}\mathbf{Q}_{i+1,j}\mathbf{N}^{-1}_{j}\mathbf{V}_j\!\cdots\!\!\mathbf{V}_N \right>\!\!=\!0  
$$
for all $\mathbf{Q}_{i+1,j}\in H_2$. 
Hence $\tilde{\mathbf{P}}_{i+1}$ must satisfy
$$
\left<\mathbf{U}_{i+1}^*\tilde{\mathbf{P}}_{i}-\tilde{\mathbf{P}}_{i+1}, \mathbf{M}_{i+1}^{-1}\mathbf{Q}_{i+1,j}\mathbf{N}_{j}^{-1}\right>=0 \; \forall \mathbf{Q}_{i+1,j}\in H_2.
$$ 
Such $\tilde{\mathbf{P}}_{i+1}$ is given by $\tilde{\mathbf{P}}_{i+1}=\pi_{H_2}(\mathbf{U}_{i+1}^*\tilde{\mathbf{P}}_i)$, since in this case, the inner product of $\mathbf{U}_{i+1}^*\tilde{\mathbf{P}}_i-\tilde{\mathbf{P}}_{i+1}\in H_2^\perp$ and $\mathbf{M}_{i+1}^{-1}\mathbf{Q}_{i+1,j}\mathbf{N}_{j}^{-1}\in H_2$ is zero.
The projection $\pi_{H_2}(\mathbf{U}_{i+1}^*\tilde{\mathbf{P}}_i)$ is computed as follows. For every $i\in\{0,1,\cdots, N-1\}$,
\begin{align*}
\mathbf{U}_{i+1}^*\tilde{\mathbf{P}}_i
&=\left[ \begin{array}{cc|c}
-{A_{i+1,i}^{KL}}^T & C_{U_{i+1}}^T\Gamma_i & 0 \\
0 & \mathcal{A} & \Lambda \\ \hline
-B_{U_{i+1}}^T & D_{U_{i+1}}^T\Gamma_i & 0\end{array}\right] \\
&=\left[ \begin{array}{cc|c}
-{A_{i+1,i}^{KL}}^T & 0 & * \\
0 & \mathcal{A} & \Lambda \\ \hline
*& \Gamma_{i+1} & 0\end{array}\right]
\end{align*}
To obtain the last expression, a similarity transformation is applied to the state space matrices by left-multiplying by $\left[ \begin{array}{cc} I & -X_{i+1}\tilde{J}_{i+1} \\ 0 & I\end{array}\right]$ and right-multiplying by its inverse. 
Notice that the Riccati equation (\ref{rc1c}) or (\ref{rcic}) appears on the upper right block of the state space ``A" matrix, and hence this component is zero. 
Moreover, since $\mathcal{A}$ is stable and $-{A_{1,0}^{KL}}^T$ is anti-stable, its projection onto $H_2$ is given by
$
\tilde{\mathbf{P}}_{i+1}=\pi_{H_2}(\mathbf{U}_{i+1}^*\tilde{\mathbf{P}}_i)=\left[ \begin{array}{c|c}\mathcal{A} & \Lambda \\ \hline \Gamma_{i+1} & 0 \end{array}\right].
$

(2). Similarly, we show that if $\hat{\mathbf{P}}_{j}$ is in the assumed form, then so is $\hat{\mathbf{P}}_{j-1}$.
From the optimality condition, it is possible to infer that $\hat{\mathbf{P}}_{j-1}=\pi_{H_2}(\hat{\mathbf{P}}_{j}\mathbf{V}_{j-1}^*)$. For every $j\in\{1,2,\cdots, N+1\}$, 
\begin{align*}
\hat{\mathbf{P}}_{j}\mathbf{V}_{j-1}^*&=\left[ \begin{array}{cc|c}
\mathcal{A} & \Lambda_j B_{V_{j-1}}^T & \Lambda_j D_{V_{j-1}}^T \\
0 & -{A_{j,j-1}^{KL}}^T & -C_{V_{j-1}}^T \\ \hline
\Gamma & 0 & 0
 \end{array}\right] \\
&=\left[ \begin{array}{cc|c}
\mathcal{A} & 0 & \Lambda_{j-1} \\
0 & -{A_{j,j-1}^{KL}}^T & * \\ \hline
\Gamma & * & 0
 \end{array}\right].
\end{align*}
A similarity transformation is applied to the state space matrices by left-multiplying by $\left[ \begin{array}{cc} I & -\hat{J}_{j-1}Y_{j-1} \\ 0 & I\end{array}\right]$ and right-multiplying by its inverse. Since $-{A_{j,j-1}^{KL}}^T$ is anti-stable, its projection onto $H_2$ is 
$
\hat{\mathbf{P}}_{j-1}=\pi_{H_2}(\hat{\mathbf{P}}_{j}\mathbf{V}_{j-1}^*)=\left[ \begin{array}{c|c}\mathcal{A} & \Lambda_{j-1} \\ \hline \Gamma & 0 \end{array}\right].
$
\end{proof}

Proof of Lemma \ref{lempij} is by induction. When $i=0$ and $j=N+1$, the identity (\ref{p_ij}) 
clearly holds since $\mathbf{P}'_{0,N+1}=\mathbf{G}_{11}^{cl}$.
So suppose (\ref{p_ij}) holds for some $(i,j)$ such that $0\leq i<j\leq N+1$.
By the nested projection, 
$\pi_{S_{i+1,j}}(\mathbf{G}_{11}^{cl})=\pi_{S_{i+1,j}}(\mathbf{F})$, where $\mathbf{F}=\pi_{S_{i,j}}(\mathbf{G}_{11}^{cl})=\mathbf{U}_1\cdots \mathbf{U}_i \mathbf{P}_{i,j}\mathbf{V}_j\cdots \mathbf{V}_N$.
Applying Lemma \ref{lemproj} (1), we have that 
$\pi_{S_{i+1,j}}(\mathbf{G}_{11}^{cl})=\mathbf{U}_1\cdots \mathbf{U}_{i+1} \mathbf{P}_{i+1,j}\mathbf{V}_j\cdots \mathbf{V}_N$.
Hence, we have verified that (\ref{p_ij}) holds for $(i+1,j)$.
Similarly, by the nested projection,
$\pi_{S_{i,j-1}}(\mathbf{G}_{11}^{cl})=\pi_{S_{i,j-1}}(\mathbf{F})$.
Applying Lemma \ref{lemproj} (2), we have that 
$\pi_{S_{i,j-1}}(\mathbf{G}_{11}^{cl})=\mathbf{U}_1\cdots \mathbf{U}_{i} \mathbf{P}_{i,j-1}\mathbf{V}_{j-1}\cdots \mathbf{V}_N$.
Hence, we have verified that (\ref{p_ij}) holds for $(i,j-1)$.
This proves that the identity (\ref{p_ij}) holds for every subspace in Fig. \ref{figdiagram}.
Finally, state space expressions for $\mathbf{R}_{(i-1,j)\rightarrow (i,j)}=\mathbf{P}'_{i-1,j}-\mathbf{U}_i\mathbf{P}'_{i,j}$ and $\mathbf{R}_{(i,j+1)\rightarrow (i,j)}=\mathbf{P}'_{i,j+1}-\mathbf{P}'_{i,j}\mathbf{V}_j$ are obtained by  straightforward state space manipulations.

\subsection{Certainty Equivalence}
\label{secce}
We verify that $x^{K_i}(t)$ can be interpreted as the least mean square estimate of $x(t)$ conditioned on the observations of outputs of upstream subsystems.
In other words, if $\mathbf{H}_{y_j}:w\mapsto y_{\uparrow j}$ and $\mathbf{F}:w\mapsto x$ are given, a transfer function $\mathbf{Q}_{x_j}:y_{\uparrow j}\mapsto x^{K_j}$ defined by the proposed controller minimizes $\|\mathbf{F}-\mathbf{Q}_{x_j}\mathbf{H}_{y_j}\|$ over $H_2$.
Notice that $\mathbf{H}_{y_j}=E_{\uparrow j}\mathbf{G}_{21}^{cl}$,
$$
\mathbf{F}=\left[ \begin{array}{c|c} 
\mathcal{A} & \mathcal{W} \\ \hline 
\text{row}\{0,\cdots,0,I\} & 0 \end{array}\right],
\mathbf{Q}_{x_j}=\left[ \begin{array}{c|c} 
A_K & B_KE^{\uparrow j} \\ \hline 
E_j & 0 \end{array}\right].
$$
By the optimality condition, it suffices to check that $\pi_{S_{0,j}}(\mathbf{F})=\mathbf{Q}_{x_j}\mathbf{H}_{y_j}$.
Applying Lemma \ref{lemproj} (2) repeatedly, the LHS becomes $\pi_{S_{0,j}}(\mathbf{F})=\hat{\mathbf{P}}_j\mathbf{V}_j\cdots\mathbf{V}_N$ where
$\hat{\mathbf{P}}_j=\left[ \begin{array}{c|c} 
\mathcal{A} & -\mathcal{L}_j{\Phi_{\uparrow j}^{\uparrow j}}^{\frac{1}{2}} \\ \hline 
\text{row}\{0,\cdots,0,I\} & 0 \end{array}\right]$,
while the RHS is
\begin{align*}
\mathbf{Q}_{x_j}\mathbf{H}_{y_j}&=\mathbf{Q}_{x_j}\mathbf{N}_j^{-1}\mathbf{V}_j\cdots\mathbf{V}_N \\
&=\left[ \begin{array}{c|c} 
\mathcal{A} & -\mathcal{L}_j{\Phi_{\uparrow j}^{\uparrow j}}^{\frac{1}{2}} \\ \hline 
\text{row}\{ E_j,0\} & 0 \end{array}\right]\mathbf{V}_j\cdots \mathbf{V}_N.
\end{align*}
To see $\hat{\mathbf{P}}_j=\mathbf{Q}_{x_j}\mathbf{H}_{y_j}$, notice that
\begin{align}
\hat{\mathbf{P}}_j-\mathbf{Q}_{x_j}\mathbf{H}_{y_j}&=
\left[ \begin{array}{c|c} 
\mathcal{A} & -\mathcal{L}_j{\Phi_{\uparrow j}^{\uparrow j}}^{\frac{1}{2}} \\ \hline 
\text{row}\{-E_j,I\} & 0 \end{array}\right] \nonumber  \\ 
&=
\left[ \begin{array}{c|c} 
\bar{\mu}\mathcal{A}\bar{\zeta} & -\bar{\mu}\mathcal{L}_j{\Phi_{\uparrow j}^{\uparrow j}}^{\frac{1}{2}} \\ \hline 
\text{row}\{-E_j,I\}\bar{\zeta} & 0 \end{array}\right].  \label{eqzero}
\end{align}
The upper right block of (\ref{eqzero}) has nonzero matrices only on the first $j$ subblocks, while the first $j$ subblocks of the lower left block of (\ref{eqzero}) are all zero matrices. Since $\bar{\mu}\mathcal{A}\bar{\zeta}$ is upper-triangular as observed in (\ref{muAzeta}), all controllable states are not observable and (\ref{eqzero}) is identically zero.
This proves $\pi_{S_{0,j}}(\mathbf{F})=\mathbf{Q}_{x_j}\mathbf{H}_{y_j}$.



\end{document}